\theoremstyle{plain}
\newtheorem{Thm}{Theorem}[section]
\newtheorem{Veta}[Thm]{Theorem}
\newtheorem{Lemma}[Thm]{Lemma}
\newtheorem{Dusledek}[Thm]{Corollary}
\theoremstyle{definition}
\newtheorem{Priklad}[Thm]{Example}
\theoremstyle{remark}
\newtheorem{remark}[Thm]{Remark}
\numberwithin{equation}{section}
\newcommand{\E}{\ensuremath{\mathbb{E}\,}}
\newcommand{\R}{\ensuremath{\mathbb{R}}}
\newcommand{\N}{\ensuremath{\mathbb{N}}}
\newcommand{\dif}{\,\mathrm{d}}
\newcommand{\bb}{{V,V^{*}}}
\newcommand\blfootnote[1]{%
  \begingroup
  \renewcommand\thefootnote{}\footnote{#1}%
  \addtocounter{footnote}{-1}%
  \endgroup
}
\begin{document}

\title
{Filtering of Gaussian processes in Hilbert spaces}
\author{V.~Kubelka\thanks{\textit{Faculty of Mathematics and Physics, Charles University,
Sokolovsk\'{a} 83, 186 75 Prague 8, Czech Republic}} \\ \texttt{kubelka@karlin.mff.cuni.cz}
 \and B.~Maslowski\footnotemark[1] \\ \texttt{maslow@karlin.mff.cuni.cz}
 }

\maketitle

\begin{abstract}
Linear filtering problem for infinite-dimensional Gaussian processes is studied, the observation process being finite-dimensional. Integral equations for the filter and for covariance of the error are derived. General results are applied to linear SPDEs driven by Gauss-Volterra process observed at finitely many points of the domain.
\end{abstract}

\blfootnote{
\textit{Keywords: Kalman - Bucy filter, stochastic evolution equations, Gaussian processes} \\
} 

\section*{Introduction}
The aim of this paper is to study the linear filtering problem for infinite-dimensional Gaussian processes with finite-dimensional observation. Typically, the signal process may be governed by a linear SPDE driven by noise that is not white in time, like a Gaussian Volterra noise or, in particular, fractional Brownian motion (FBM).  
\vspace{1mm}

An analogous problem for finite-dimensional (or scalar) processes have been studied by Kleptsyna and Le Breton \citep{AKK} for the case of general Gaussian process observed through a linear channel driven by standard Brownian motion; this problem has been revisited in \citep{KB2}. A rather general approach to filtering with fractional Brownian motion is presented in \citep{KBR} and specified in more concrete situations, for example, in \citep{KB2}.
\vspace{1mm} 

In infinite dimensions, a pioneering result belongs to Falb \cite{Falb}, where Kalman-Bucy  (KB) type theorem has been established. In this case, both observed process and observation live in a Hilbert space and are governed by linear evolution equation with a $Q$-Wiener process. We are not aware of any analogous result in infinite dimensions for general Gaussian processes that would cover, for example, linear SPDEs driven by fractional noise (however, we would like to point out that a "dual" LQ control problem has been treated, for instance, in \cite{DMP1} and \cite{DMP2}, while related statistical inference problems were addressed in numerous papers, like  \cite{MP1}, \cite{LP} or \cite{KrMa}).

\vspace{1mm}

In the present paper, integral equations for the filter and for covariance of observation error on a rigged Hilbert space are derived and the general results are applied to stochastic parabolic equation perturbed by Gauss-Volterra noise, observed at finitely many points of the domain. In this case, comparing to the classical KB theorem, there are two major obstacles: the fact that the noise does not have independent increments and the need to apply the results to linear SPDEs with space-dependent noise and observation at specific points in the domain of the equation. While the first problem is treated similarly as in the finite-dimensional papers quoted above, the second one is overcome by posing the equation on a rigged Hilbert space. The larger space $H$ (which is usually a Lebesgue space on the domain) is suitable for the definition of the noise term and the stochastic integral, while the smaller space $V$ is contained in the space of continuous functions (for which values at given points are well defined).

\vspace{1mm} 

The paper is divided into four Sections. In section 1 the problem is posed and the main result (Theorem \ref{main_theorem}) is stated and proved. It is shown that the filter and observation error satisfy certain integral equations. In Section 2, uniqueness of solutions to the (nonlinear) integral equation for the error covariance (Theorem \ref{uniqueness_theorem}) is shown. Section 3 is devoted to Examples. At first it is demonstrated that if the signal process is governed by linear evolution equation driven by  a standard cylindrical Wiener process, our result stated in Theorem \ref{main_theorem} reduces to an infinite-dimensional analogue of the classical Kalman-Bucy Theorem (Theorem \ref{theorem_wiener-KB}). Then, following \cite{CoMa}, \cite{CMO} and \cite{CMS}, some basic concepts concerning the infinite-dimensional Gauss-Volterra processes and SPDEs driven by them are recalled. Finally, in Example \ref{final_example} the general results are applied to $2m$-th order stochastic parabolic equation on a bounded domain (and further specified in the case of stochastic heat equation). In Corollary \ref{final_corollary} the main results is specified to the case of pointwise observation of solution to such equation.  

\vspace{1mm}

Bounded linear operator mapping a Banach space $X$ to a a Banach space $Y$ is denoted as $\mathcal{L}(X,Y)$,  $\mathcal{L}(X): = \mathcal{L}(X,X)$. The space of Hilbert-Schmidt operators on a Hilbert space H is denoted by $\mathcal{L}_2 (H)$.

\section{Solution of the filtering problem}
Let $(H,V)$ be a rigged separable Hilbert space, where $H = (H, \langle \cdot {,} \cdot \rangle_H , \| \cdot \|_H )$ and $V = (V, \langle \cdot {,} \cdot \rangle_V , \| \cdot \|_V )$ are separable Hilbert spaces such that $V \subset H$, $V$ is dense in $H$ and identifying $H$ with the dual $H^*$ the embeddings
$$V \hookrightarrow H = H^* \hookrightarrow V^*$$
are continuous and dense. The duality pairing between $V$ and $V^*$ is defined by the inner product on $H$, that is $\langle u,v \rangle_\bb = \langle u,v \rangle_H$ for $u \in V \subset H$ and $v \in H \subset V^*$.

For arbitrary $x, y \in V$ we define tensor product $x \circ y \in \mathcal{L}(V^*,V)$, $(x \circ y)v = x \langle y {,} v \rangle_\bb$, $v \in V^{*}$.

Let us consider stochastic basis $(\Omega , F,P,(F_t))$ and the signal $\theta = \lbrace \theta_t, t \in [0,T] \rbrace$ that is a centered Gaussian mean - square  continuous measurable process in $V$. Let $\xi = \lbrace \xi_t, t \in [0,T] \rbrace$ denote an $\mathbb{R}^{n}$ - valued observation process given as 
\begin{equation}\label{observation_process}
  \xi_t = \int^t_0  A(s) \theta_s \dif s + W_t,
\end{equation} 
where $\left( A(s) \right)_{s \in [0,T]}$ is a family of linear operators $V \rightarrow\mathbb{R}^{n}$ such that mapping $t \rightarrow A(t)$ is strongly measurable and $\| A(t) \|_{\mathcal{L}(V,\mathbb{R}^{n})} \leq K$, $t \in [0,T]$ for some $K < \infty$. Here $W = \lbrace W_t, t \in [0,T]  \rbrace$ is a standard $\mathbb{R}^{n}$ - valued Wiener process independent of the signal $\theta$.

Further, assume that for each $t \in [0,T]$ operator $A(t)$ can be decomposed into functionals $A_1(t),\dots,A_n(t) \in V^*$ such that $A(t)b = (\langle b , A_1(t) \rangle_\bb , \dots, \langle b , A_n(t) \rangle_\bb)^T$ for all $b \in V$. Note that the dual operator $A^{*}(t)$: $\R^n \rightarrow V^{*}$ then satisfies $A^{*}(t)z = \sum_{i=1}^n z_i A_i (t)$ for all $z \in \R^n$.  
   
We are dealing with the optimal filter $\widehat{\theta}_t$, which is defined as
\begin{equation*}
\widehat{\theta}_t = \mathbb{E}[\theta_t| F^{\xi}_{t}],
\end{equation*}
where $( F^{\xi}_{t})_{t \in [0,T]}$ is the filtration generated by the observation process $\xi$.

Set $K^{\theta}(t,s) = \mathbb{E}[ \theta_t \circ \theta_s ]$, $t,s \in [0,T]$. Notice that the mean - square  continuity of the process $\lbrace \theta_t, t \in [0,T] \rbrace$ implies that the mapping $K^{\theta}: [0,T]^2 \rightarrow \mathcal{L}(V^{*},V)$ is strongly continuous and bounded.

\begin{Veta}\label{main_theorem}
Let $\Delta = \lbrace (t,s) \in [0,T]^2 ; 0 \leq s \leq t \leq T \rbrace$.
The filter $\widehat{\theta}$ satisfies the stochastic integral equation
  \begin{equation}\label{main_integral_eq}
  \widehat{\theta}_t = \int^t_0 \Phi(t,s)A^{*}(s) \dif \xi_s -  \int^t_0 \Phi(t,s)A^{*}(s)A(s) \widehat{\theta}_s \dif s, \ \ t \in [0,T],
  \end{equation}
where operator $\Phi$: $\Delta \rightarrow \mathcal{L}(V^*,V)$ defined as $\Phi(t,s) = \mathbb{E}[\theta_t \circ (\theta_s -  \widehat{\theta}_s)]$ for all $(s,t) \in \Delta$ is strongly continuous and satisfies the integral equation
    \begin{equation} \label{main_cov_eq}
    \Phi(t,s) = K^{\theta}(t,s) - \sum_{j=1}^n \int^s_0 \left( \Phi(t,r)A_j(r) \right) \circ \left( \Phi(s,r) A_j(r) \right) \dif r.
    \end{equation}
Moreover, for all $t \in [0,T]$, $\Phi(t,t)$ is the covariance of the estimation error at time  $t \in [0,T]$, that is,
  \begin{equation}\label{main_var_eq}
     \Phi(t,t) = \mathbb{E} \left[(\theta_t -  \widehat{\theta}_t) \circ (\theta_t -  \widehat{\theta}_t) \right]
  \end{equation}
 holds.
\end{Veta}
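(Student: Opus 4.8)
The plan is to exploit the Gaussianity of the pair $(\theta, \xi)$, which makes the optimal filter $\widehat{\theta}_t = \E[\theta_t \mid F^\xi_t]$ coincide with the orthogonal projection (in the $L^2$ sense, componentwise on $V$) of $\theta_t$ onto the closed linear span of the observations. This is the standard entry point for Kalman--Bucy arguments, and it is available here because both $\theta$ and $W$ are Gaussian and independent, so $\xi$ is Gaussian and jointly Gaussian with $\theta$.

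First I would establish an \emph{innovation} representation. Define the innovation process $\nu_t = \xi_t - \int_0^t A(s)\widehat{\theta}_s \dif s$, and show it is an $\R^n$-valued Wiener process (with respect to the observation filtration) and that it carries the same filtration as $\xi$. The key computational fact is that, for any functional-valued integrand, projecting against increments of $\nu$ reduces to computing covariances, and the orthogonality $\E[(\theta_t - \widehat{\theta}_t)\,g(\xi)] = 0$ for $F^\xi_t$-measurable $g$ lets me discard all terms lying in the observed span. Writing $\widehat{\theta}_t$ as a stochastic integral $\int_0^t G(t,s)\dif \nu_s$ against the innovations (justified because the span of the innovations equals the observed span), I would identify the kernel $G(t,s)$ by testing against $\dif\nu_s$: this forces $G(t,s) = \E[\theta_t \circ (\text{innovation density at } s)]$, and a short calculation using the definition $\nu$ and independence of $W$ from $\theta$ yields $G(t,s) = \Phi(t,s)A^*(s)$, where $\Phi(t,s) = \E[\theta_t \circ (\theta_s - \widehat{\theta}_s)]$. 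Substituting $\dif\nu_s = \dif\xi_s - A(s)\widehat{\theta}_s \dif s$ then produces the two integrals in \Ref{main_integral_eq}. Throughout, I must check that all integrals make sense as $V$-valued (Bochner or weak) integrals, using the boundedness $\|A(t)\|_{\mathcal{L}(V,\R^n)}\le K$ and the boundedness of $K^\theta$ to control $\Phi$.

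Next I would derive the covariance equation \Ref{main_cov_eq}. The natural route is to compute $\Phi(t,s) = \E[\theta_t \circ (\theta_s - \widehat{\theta}_s)]$ directly, inserting the integral representation of $\widehat{\theta}_s$ from \Ref{main_integral_eq}. Expanding $\widehat{\theta}_s$ as an innovation integral and using the Itô isometry for the Wiener process $\nu$ (equivalently, computing the cross-covariance of two stochastic integrals against the same vector Wiener process), the off-diagonal terms collapse into a single $dr$-integral of tensor products $(\Phi(t,r)A_j(r)) \circ (\Phi(s,r)A_j(r))$ summed over the $n$ innovation components, which is exactly the subtracted term in \Ref{main_cov_eq}; the leading term $K^\theta(t,s)$ comes from $\E[\theta_t \circ \theta_s]$. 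Setting $s = t$ and using self-adjointness of the resulting expression gives \Ref{main_var_eq} as the error covariance, after verifying that $\E[\theta_t \circ (\theta_t - \widehat{\theta}_t)] = \E[(\theta_t - \widehat{\theta}_t)\circ(\theta_t - \widehat{\theta}_t)]$, which follows once more from the projection orthogonality $\E[\widehat{\theta}_t \circ (\theta_t - \widehat{\theta}_t)] = 0$.

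The main obstacle I anticipate is rigorously justifying the innovation approach in the operator-valued, rigged-space setting rather than the scalar setting of \citep{AKK}: specifically, proving that $\nu$ is a genuine Wiener process adapted to $F^\xi$ and that $F^\nu_t = F^\xi_t$, and making precise the stochastic integral $\int_0^t \Phi(t,s)A^*(s)\dif\nu_s$ as a $V$-valued object when $A^*(s)$ maps into $V^*$ and $\Phi(t,s) \in \mathcal{L}(V^*,V)$. The delicate point is that the integrand's range is $V$ while the natural isometry computations take place through the $V$--$V^*$ duality pairing; I would handle this by testing against arbitrary $v^* \in V^*$ to reduce every identity to a scalar statement about real-valued Gaussian processes, proving the scalar versions by the classical projection/innovation arguments, and then invoking strong continuity and boundedness of $K^\theta$ and $\Phi$ to lift the pointwise-tested identities back to genuine $V$-valued (respectively $\mathcal{L}(V^*,V)$-valued) equations.
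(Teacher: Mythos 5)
Your overall architecture---innovation process, a representation of $\widehat{\theta}_t$ as a stochastic integral against the innovations with deterministic kernel, identification of that kernel via orthogonality and the It\^o isometry, then the covariance equation by substituting the representation back---is the same as the paper's. The genuine divergence, and the gap, is in how you obtain the representation $\widehat{\theta}_t = \int_0^t G(t,s)\dif \nu_s$ with \emph{deterministic} $G$. You justify it by asserting that the closed linear span (equivalently, for Gaussian systems, the filtration) of the innovations equals that of the observations, $F^\nu_t = F^\xi_t$. This is precisely the classical ``innovations problem''; it is true in the linear Gaussian setting, but it is not a routine verification, and your proposed remedy---testing against $v^* \in V^*$ to reduce to scalar statements---does not address it, since the difficulty is not the vector-valued setting but the inclusion $F^\xi_t \subseteq F^\nu_t$ itself (the reverse inclusion is the easy one). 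Moreover, the step is load-bearing: without it, the orthogonality computation only identifies the projection of $\theta_t$ onto the innovation span, which a priori could differ from $\widehat{\theta}_t$, so your identification argument does not close. If you wanted to complete your route you would need a Kailath-type causal invertibility argument: first show, using Gaussianity and the two-sided bound $\int_0^t |g|^2 \dif r \le \E |\int_0^t g \dif \xi|^2 \le C \int_0^t |g|^2 \dif r$ (which makes the set of deterministic Wiener-type integrals against $\xi$ closed and equal to the linear span of the increments of $\xi$), that $A(s)\widehat{\theta}_s$ is itself an integral of $\dif\xi$ with deterministic Volterra kernel, so that $\nu = (I-K)\xi$ for a Volterra operator $K$; then invert by a resolvent/Neumann series to conclude that $\xi$ is $F^\nu$-adapted.

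The paper circumvents the innovations problem entirely, and this is worth understanding: it invokes the Fujisaki--Kallianpur--Kunita martingale representation theorem, which represents every square-integrable $(F^\xi_t)$-martingale as a stochastic integral against the innovation process with merely $(F^\xi_t)$-\emph{progressively measurable} integrands (no equality of filtrations is required for this), and then proves a separate lemma (Lemma \ref{rep_lemma}) showing that Gaussianity of the system together with the Theorem on Normal Correlation forces those integrands to coincide with their expectations, i.e.\ to be deterministic. That lemma is the paper's substitute for your span-equality claim, and it is exactly the piece your proposal is missing. If you replace your unproved claim either by the FKK-plus-Normal-Correlation argument or by the causal-invertibility argument sketched above, the remainder of your plan---the Wiener property of $\nu$, the identification $G(t,s) = \Phi(t,s)A^*(s)$, the substitution $\dif\nu_s = \dif\xi_s - A(s)\widehat{\theta}_s\dif s$ yielding \Ref{main_integral_eq}, the derivation of \Ref{main_cov_eq} via the It\^o isometry, and \Ref{main_var_eq} from the projection orthogonality $\E [\widehat{\theta}_t \circ (\theta_t - \widehat{\theta}_t)] = 0$---goes through essentially as in the paper.
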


In the proof of theorem (\ref{main_theorem})the following lemma will be useful.
 \begin{Lemma}\label{rep_lemma}
Let us consider an $\mathbb{R}^n$ - valued process of diffusion type  $\xi = \lbrace \xi_t, t \geq 0 \rbrace$ on probability space $( \Omega , {F}, {P}, ({F}_t) )$ with the differential
\begin{equation}\label{lemma_xi_eq}
 \dif \xi_t = a_t \dif t + \dif W_t,\ \ \ \xi_0 = 0,
\end{equation} 
where  $a = \lbrace a_t, t \geq 0 \rbrace$ is an $(F^{\xi}_t)$ - progresively measurable $\mathbb{R}^n$ - valued process and 
 $W = \lbrace W_t, t \geq 0 \rbrace$ is a standard $(F_t)$ - Wiener process in  $\mathbb{R}^n$.
If  
\begin{equation}\label{lemma_cond}
\int^T_0 \mathbb{E} {\mid a_s \mid}^2 \dif s < \infty
\end{equation} 
 for every $T < \infty$, then
any one - dimensional $(F_t^{\xi})$ - martingale X, forming together with $( \xi, W )$ a Gaussian system can be represented in the form
\begin{equation}\label{M_determ_rep}
X_t = \mathbb{E} X_0 + \sum_{j = 1}^n \int^t_0 f_j (s) \dif W^j_s,
\end{equation}
where $f_1, \dots , f_n$ are deterministic square integrable measurable functions.
\end{Lemma}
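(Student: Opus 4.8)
The plan is to establish the representation in three stages: first to identify $W$ as an $(F_t^\xi)$-Wiener process that generates the same filtration as $\xi$, then to invoke the martingale representation theorem over this Brownian filtration, and finally to exploit the Gaussian hypothesis to force the integrands to be deterministic.

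First I would observe that, because $a$ is $(F_t^\xi)$-progressively measurable, the identity $W_t = \xi_t - \int_0^t a_s \dif s$ exhibits $W$ as an $(F_t^\xi)$-adapted process; since $W$ is an $(F_t)$-Wiener process and $F_t^\xi \subseteq F_t$, its increments after time $s$ remain independent of $F_s^\xi$, so $W$ is in fact an $(F_t^\xi)$-Wiener process, and the same identity gives $F_t^W \subseteq F_t^\xi$. The reverse inclusion $F_t^\xi \subseteq F_t^W$ is the crux: it is exactly the statement that, for the diffusion-type process \eqref{lemma_xi_eq}, the driving Wiener process reconstructs the whole observation $\sigma$-algebra. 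Under the integrability hypothesis \eqref{lemma_cond} this coincidence of filtrations is the classical innovation result for diffusion-type processes (Liptser--Shiryaev), and \eqref{lemma_cond} is precisely what makes it available. I regard this filtration identity as the main obstacle, both because it is the deepest input and because the naive ``innovations'' statement is false without the structural assumptions present here.

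Granting $F_t^\xi = F_t^W$, the filtration is Brownian, so by the It\^o martingale representation theorem every square-integrable $(F_t^\xi)$-martingale --- and $X$ is square-integrable, being Gaussian --- admits a continuous modification together with a representation
\[
X_t = \E X_0 + \sum_{j=1}^n \int_0^t g_j(s)\dif W_s^j,
\]
where the $g_j$ are $(F_s^\xi)$-progressively measurable with $\sum_j \E\int_0^T g_j^2 \dif s = \E\langle X\rangle_T < \infty$. Equivalently one may cite the diffusion-type martingale representation theorem directly, which already packages both the filtration step and this representation. It then remains only to upgrade the $g_j$ to deterministic functions $f_j$.

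For this last step I would use the cross-variation identity $\langle X, W^j\rangle_t = \int_0^t g_j(s)\dif s$ together with the Gaussian hypothesis. Since $X$ and each component $W^j$ are jointly Gaussian continuous $(F_t^\xi)$-martingales, so are $X \pm W^j$; a continuous Gaussian martingale has deterministic quadratic variation (its increments are uncorrelated, hence independent, so the variance function furnishes the bracket), and by polarization $\langle X, W^j\rangle = \tfrac14\bigl(\langle X + W^j\rangle - \langle X - W^j\rangle\bigr)$ is deterministic as well. Consequently $t \mapsto \int_0^t g_j(s)\dif s$ is a deterministic absolutely continuous function, whence $g_j$ coincides almost everywhere with a deterministic measurable function $f_j$; these $f_j$ are square integrable by the bound above, and substituting them yields \eqref{M_determ_rep}.
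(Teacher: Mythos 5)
Your opening step (that $W$ is an $(F_t^\xi)$-Wiener process and $F_t^W \subseteq F_t^\xi$) and your closing step are fine, but the step you yourself identify as the crux --- the reverse inclusion $F_t^\xi \subseteq F_t^W$ --- is a genuine gap: there is no ``classical innovation result'' giving this for diffusion-type processes, and the statement is in fact \emph{false} under exactly the hypotheses you invoke. Tsirelson's classical counterexample exhibits a \emph{bounded} drift $a$ that is $(F_t^\xi)$-progressively measurable (a functional of the past of $\xi$) for which $\dif \xi_t = a_t \dif t + \dif W_t$ admits no strong solution, so that $F_t^W \subsetneq F_t^\xi$ even though \eqref{lemma_cond} holds trivially; hence \eqref{lemma_cond} is not ``precisely what makes it available''. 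The inclusion $F_t^\xi \subseteq F_t^W$ is the innovation problem, which fails in general. What is true, and what the paper uses, is the weaker statement of Theorem 3.1 in \citep{FKK}: every square-integrable $(F_t^\xi)$-martingale is a stochastic integral of an $(F_t^\xi)$-\emph{progressively measurable} integrand against the innovation Wiener process (here $W$ itself, since $a$ is already $(F_t^\xi)$-adapted). This delivers the representation \eqref{M_stoch_rep} while sidestepping, not ``packaging'', the filtration identity --- so your parenthetical alternative is actually the correct route, but it is not equivalent to your main argument. If you insisted on the filtration identity itself, you would have to exploit the Gaussian structure (in any non-vacuous instance of the lemma $(\xi, W)$ is jointly Gaussian, and for Gaussian diffusion-type processes the innovation problem does have a positive answer, e.g.\ via Hitsuda's canonical representation theorem); as written, your proof uses Gaussianity only in the final step, which is too late.

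That final step, by contrast, is correct and is genuinely different from the paper's argument: you deduce that $\langle X, W^j\rangle_t = \int_0^t g_j(s)\dif s$ is deterministic because $X \pm W^j$ are continuous Gaussian $(F_t^\xi)$-martingales, whose quadratic variations are deterministic, and then polarize. The paper instead proves $\E \left[ (X_t - X_s)(W_t^j - W_s^j) \mid F_s^\xi \right] = \E \left[ (X_t - X_s)(W_t^j - W_s^j) \right]$ via the Theorem on Normal Correlation and then runs a partition and uniform-integrability limiting argument in the manner of Theorem 5.21 of \cite{LS1} to conclude $f_j = \E f_j$ almost everywhere. Your bracket-and-polarization argument is shorter and avoids that limiting procedure, so once the representation with adapted integrands is obtained correctly (from \citep{FKK}, as in the paper), your conclusion stands.
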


\begin{proof}
Note that by (\ref{lemma_xi_eq}) $W$ is also $(F^{\xi}_t)$ - Wiener process.

According to Theorem 3.1 in \citep{FKK} there is a representation
\begin{equation}\label{M_stoch_rep}
X_t = \mathbb{E} X_0 + \sum_{j = 1}^n \int^t_0 f_j (s) \dif W^j_s,
\end{equation}
where $f = (f_1, \dots , f_n)$ is $(F^{\xi}_t)$ - progresively measurable $\R^n$ - valued process and
\begin{equation}\label{f_property}
\int^T_0 \mathbb{E} {\mid f (s) \mid}^2 \dif s  < \infty
\end{equation}
 for every $T < \infty$.
 
Using Gaussianity of the process $(X, \xi, W)$ and Theorem on Normal Correlation (cf. Theorem 13.1 in \cite{LS2}) it can be shown, analogously to the proof of Theorem 5.21 in \cite{LS1}, that for all $j = 1, \dots , n$ and for all $0 \leq s \leq t < \infty$ 
\begin{equation}\label{lem_proof_det_cov}
\mathbb{E} \left[ \left( X_t - X_s \right) \left( W^j_t - W^j_s \right) \mid F^{\xi}_{s} \right] 
= \mathbb{E} \left[ \left( X_t - X_s \right) \left( W^j_t - W^j_s \right) \right]
\end{equation}
holds.

Furter, using It\^o formula and the fact that mixed variation ${\langle  W^j {,} W^k \rangle}_t = 0$ for $j \neq k$ and ${\langle  W^j {,} W^k \rangle}_t = t$ for $j = k$ we have
\begin{align}\label{lem_proof_Ito}
&\left( X_t - X_s \right) \left( W^j_t - W^j_s \right)
=  \int^t_s \left( X_r - X_s \right) \dif  W^j_r +  \sum_{k = 1}^n \int^t_s \left( W^j_r - W^j_s \right) f_k (r) \dif W^k_r + \int^t_s f_j (r) \dif r. 
\end{align}
Now, using (\ref{lem_proof_det_cov}), (\ref{lem_proof_Ito}) and martingale property of It\^o integral for all $j = 1, \dots , n$ we get 
\begin{equation*}
\mathbb{E} \left[ \int^t_s f_j (r) \dif r \right]
= \mathbb{E} \left[ \int^t_s f_j (r) \dif r  \mid F^{\xi}_{s} \right].
\end{equation*}
Taking into account (\ref{f_property}) we can use Fubini's theorem to obtain
\begin{equation}\label{lem_proof_condexp=exp}
 \int^t_s \mathbb{E} \left[f_j (r) \right] \dif r
= \int^t_s \mathbb{E} \left[ f_j (r) \mid F^{\xi}_{s} \right] \dif r 
\end{equation}
for all $j = 1, \dots , n$.

We can use (\ref{lem_proof_condexp=exp}) piecewise on a sequence of decompositions
 $\lbrace 0 = t_0^{(n)} < \cdots < t_n^{(n)} = t, n \in \N \rbrace$ of interval $[0,t]$ such that 
$\max_{i= 1, \ldots n} \mid t_{i}^{(n)} - t_{i-1}^{(n)} \mid \xrightarrow{n \rightarrow \infty} 0$
to obtain
\begin{equation*}
\int_0^t \mathbb{E} \left[f_j (r) \right] \dif r = \int_0^t f_{j,n} (r) \dif r, \quad j = 1, \dots , n,
\end{equation*}
where $f_{j,n} (r) = \mathbb{E} \left[ f_j (r) \mid F^{\xi}_{t_i^{(n)}} \right] $, $t_i^{(n)} \leq r < t_{i+1}^{(n)}$.

Analogously to the proof of Theorem 5.21 in \cite{LS1}, using continuity of the filtration $ (F^{\xi}_{s},\ 0 < s < t)$ which follows from Theorem 5.19 in \cite{LS1} and the uniform integrability of $\lbrace f_{j,n}, n \in \N \rbrace$  we get
\begin{equation*}
 \E \mid \int^t_s \lbrace \E \left[f_j (r) \right] - f_j (r) \rbrace \dif r \mid \xrightarrow{n \rightarrow \infty} 0, \quad j = 1, \dots , n.
\end{equation*}
From this, for all $j = 1, \dots , n$ and each $t,\ 0 \leq t \leq T$, we have
\begin{equation*}
\int^t_s \mathbb{E} \left[f_j (r) \right] \dif r
= \int^t_s  f_j (r) \dif r
\end{equation*}
and, therefore, for all $j = 1, \dots , n$ and for almost all  $t,\ 0 \leq t \leq T$
\begin{equation}\label{lem_proof_exp=determ}
 f_j (t) = \mathbb{E} \left[f_j (t) \right].
\end{equation}
Equality (\ref{lem_proof_exp=determ}) together with (\ref{M_stoch_rep}) proves the representation (\ref{M_determ_rep}).
\end{proof}
\vspace{1cm}

Now, let us prove of theorem (\ref{main_theorem}).

\begin{proof}
According to Lemma 2.2 in \citep{FKK} process $\lbrace  \tilde{W}_t, t \in [0,T] \rbrace$ defined as 
\begin{equation}\label{inov_process}
\tilde{W}_t = {\xi}_t -  \int^t_0 \mathbb{E} [ A(r) \theta_r \mid F_t^{\xi} ] \dif r
= {\xi}_t -  \int^t_0 A(r) \widehat{\theta}_r \dif r.
\end{equation}
is $\mathbb{R}^{n}$ - valued $( F^{\xi}_{t} )$ - standard Wiener process called innovation process. The formula (\ref{inov_process}) reads
\begin{equation}\label{observ_inov_eq}
\dif \xi_t = A(t) \widehat{\theta}_t \dif t + \dif \tilde{W}_t,\ \ \ \xi_0 = 0
\end{equation}
and the process $\lbrace A(t) \widehat{\theta}_t,\ t \geq 0 \rbrace$ satisfies the condition (\ref{lemma_cond}) of Lemma \ref{rep_lemma} hence the observation process takes the form (\ref{lemma_xi_eq}).

Further, define the square integrable $V$ - valued process $M^s =  \lbrace M^s_t, t \in [0,T] \rbrace$ as 
\begin{equation}\label{M_def}
M^s_t = \mathbb{E}[\theta_s| F^{\xi}_{t}]
\end{equation}
for all $s \in [0,T]$.
Note that $\widehat{\theta}_t = M^t_t$ and the proces $M^s$ is $(F^{\xi}_t)$ - martingale.

Let $\lbrace e_i, i \in I \rbrace$ be an orthonormal basis on $V$. Then we have
\begin{equation}\label{Ms_rep}
M^s_t = \sum_{i \in I} e_i M^s_{i, t} 
\end{equation}
where $M^s_{i, t} = \mathbb{E}[\langle \theta_s {,} e_i \rangle_H | F^{\xi}_{t}]$. Process $M^s_i$ is one - dimensional $( F^{\xi}_t )$ - martingale for all $s \in [0,T]$ and all $i \in I$. The triple $(M^s_i, \xi, \tilde{W})$ forms a Gaussian system, therefore, in virtue of Lemma \ref{rep_lemma}
\begin{equation}\label{Msi_rep}
M^s_{i, t} = \sum_{j = 1}^n \int^t_0 F^s_{i,j} (r) \dif \tilde{W}^j_r,
\end{equation}
where $F^s_{i,1}, \dots , F^s_{i,n}$ are deterministic square integrable measurable functions, holds for every $s \in [0,T]$ and every $i \in I$.
From equations (\ref{Ms_rep}) and (\ref{Msi_rep}) we obtain
\begin{equation}\label{Ms_summ}
M^s_t = \sum_{i \in I} e_i \sum_{j = 1}^n \int^t_0 F^s_{i,j} (r) \dif \tilde{W}^j_r. 
\end{equation}
Using It\^o isometry
\begin{align*}
\infty &> \mathbb{E} \| M^s_t \|^2_{V}  = \sum_{i \in I} \mathbb{E} ( M^s_{i,t} )^2 = \sum_{i \in I} \mathbb{E} \left[    
\sum_{j = 1}^n \int^t_0 F^s_{i,j} (r) \dif \tilde{W}^j_r \right]^2
 = \sum_{i \in I} \sum_{j = 1}^n \int^t_0  \left( F^s_{i,j} (r) \right)^2 \dif r \\
 &= \sum_{j = 1}^n \int^t_0 \sum_{i \in I} \left( F^s_{i,j} (r) \right)^2 \dif r
 =  \sum_{j = 1}^n \| F^s_j \|_{L^2(V)},
\end{align*}
where $F^s(r) = (F^s_1(r), \dots , F^s_n(r))$ is square integrable $V^n$ - valued deterministic function such that $F^s_j (r) = \sum_{i \in I} e_i  F^s_{i,j} (r)$ for all $j = 1, \dots , n$.

Therefore, swaping the sums and the integral in (\ref{Ms_summ}) we finally obtain
\begin{equation}\label{Ms_characterization}
M^s_t = \int^t_0 F^s (r) \dif \tilde{W}_r = \sum_{j = 1}^n \int^t_0 F^s_j (r) \dif \tilde{W}^j_r.
\end{equation}

Now, for every $s \in [0,T]$, we can consider an arbitrary square integrable measurable $V^n$ - valued deterministic function  $f^s(r) = (f^s_1 (r), \dots , f^s_n (r))$ to show
\begin{equation}\label{zero_op_eq}
\mathbb{E}\left[ \left(\theta_s - M^s_t \right) \circ {\left(\int^t_0 f^s (r) \dif \tilde{W}_r \right)} \right] = 0.
\end{equation}

Indeed, using (\ref{M_def}) for arbitrary $v \in V^{*}$ we have
\begin{align*}
& \mathbb{E}\left[ \left(\theta_s - M^s_t \right) \circ {\left(\int^t_0 f^s (r) \dif \tilde{W}_r \right)} \right] v 
= \mathbb{E}\left[ \left(\theta_s - M^s_t \right) \langle \int^t_0 f^s (r) \dif \tilde{W}_r {,} v \rangle_\bb \right] \\
&=\mathbb{E}\left[ \mathbb{E}\left[ \left(\theta_s - M^s_t \right)  \langle \int^t_0 f^s (r) \dif \tilde{W}_r {,} v \rangle_\bb \mid  F^{\xi}_{t} \right] \right] \\
&= \mathbb{E}\left[ \left( \mathbb{E}\left[ \theta_s \mid  F^{\xi}_{t} \right] - M^s_t \right)  \langle \int^t_0 f^s (r) \dif \tilde{W}_r {,} v \rangle_\bb \right] = 0.
\end{align*}

Further, we show that
\begin{equation}\label{It_izometrie}
\mathbb{E}\left[ M^s_t \circ {\left(\int^t_0 f^s (r) \dif \tilde{W}_r \right)} \right] =
\sum_{j = 1}^n \int^t_0 F^s_j (r) \circ  {f^s_j (r)} \dif r.
\end{equation}
Note that to prove equality of two arbitrary operators $P,Q \in \mathcal{L}(V^*,V)$ it is sufficient to show $ \langle P v {,} b \rangle_V = \langle Q v {,} b \rangle_V$ for all elements $v \in V^{*}$, $b \in V$. We have that
\begin{align*}
& \langle \mathbb{E}\left[ M^s_t \circ {\left(\int^t_0 f^s (r) \dif \tilde{W}_r \right)} \right] v {,} b \rangle_V
= \langle \mathbb{E}\left[ M^s_t  \langle \int^t_0 f^s (r) \dif \tilde{W}_r {,} v \rangle_\bb \right] {,} b \rangle_V \\
&= \mathbb{E}\left[ \langle  M^s_t  \langle  \int^t_0 f^s (r) \dif \tilde{W}_r {,} v \rangle_\bb  {,} b \rangle_V \right]
= \mathbb{E}\left[  \langle  \int^t_0 f^s (r) \dif \tilde{W}_r {,} v \rangle_\bb   \langle  M^s_t  {,} b \rangle_V \right] \\
&= \mathbb{E}\left[  \langle \int^t_0 f^s (r) \dif \tilde{W}_r {,} v \rangle_\bb   \langle \int^t_0 F^s (r) \dif \tilde{W}_r {,}  b \rangle_V \right] \\
&= \mathbb{E}\left[  \langle \sum_{j = 1}^n \int^t_0 f^s_j (r) \dif \tilde{W}^j_r {,} v \rangle_\bb   \langle \sum_{j = 1}^n \int^t_0 F^s_j (r) \dif \tilde{W}^j_r  {,} b \rangle_V \right] \\
&= \mathbb{E}\left[  \sum_{j = 1}^n \int^t_0 \langle  f^s_j (r) {,} v  \rangle_\bb  \dif \tilde{W}^j_r \sum_{j = 1}^n \int^t_0 \langle F^s_j (r) {,} b \rangle_V \dif \tilde{W}^j_r \right].
\end{align*}
Using  It\^o isometry and the fact that mixed variation $ \langle \tilde{W}^i {,} \tilde{W}^j \rangle = 0,\ i \neq j$ we obtain
\begin{align*}
&\mathbb{E}\left[  \sum_{j = 1}^n \int^t_0 \langle f^s_j (r) {,} v \rangle_\bb  \dif \tilde{W}^j_r \sum_{j = 1}^n \int^t_0 \langle  F^s_j (r) {,} b \rangle_V \dif \tilde{W}^j_r \right] \\
&= \sum_{j = 1}^n \int^t_0 \langle  f^s_j (r) {,} v \rangle_\bb  \langle  F^s_j (r) {,} b \rangle_V \dif r 
= \sum_{j = 1}^n \langle \left( \int^t_0 F^s_j (r) \circ {f^s_j (r)} \dif r \right) v {,}  b \rangle_V \\
&= \langle \left( \sum_{j = 1}^n \int^t_0 F^s_j (r) \circ {f^s_j (r)} \dif r \right) v {,}  b \rangle_V 
\end{align*}
which concludes the proof of equality (\ref{It_izometrie}).

Next, we show equality
\begin{equation}\label{eq_with_Riesz}
\mathbb{E}\left[ \theta_s \circ {\left(\int^t_0 f^s (r) \dif \tilde{W}_r \right)} \right] 
=  \sum_{j = 1}^n \int^t_0 \left( \mathbb{E}\left[ \theta_s \circ {( \theta_r -  \widehat{\theta}_r )} \right] A_j (r) \right) \circ {f^s_j (r)} \dif r.
\end{equation}
Using (\ref{observation_process}), (\ref{inov_process}) and independence of $\theta$ and $W$ we have 
\begin{align*}
&\mathbb{E}\left[ \theta_s \circ {\left(\int^t_0 f^s (r) \dif \tilde{W}_r \right)} \right] 
= \mathbb{E}\left[ \theta_s \circ {\left(\int^t_0 f^s (r) \dif \xi_r - \int^t_0 f^s (r) A(r) \widehat{\theta}_r \dif r \right)} \right] \\
&= \mathbb{E}\left[ \theta_s \circ {\left(\int^t_0 \sum_{j=1}^n f^s_j (r) \langle \theta_r ,  A_j(r) \rangle_\bb \dif r + \int^t_0 f^s (r) \dif W_r - \int^t_0 \sum_{j=1}^n f^s_j (r) \langle \widehat{\theta}_r ,  A_j(r) \rangle_\bb \dif r \right)} \right] \\
&= \mathbb{E}\left[ \theta_s \circ {\left(\int^t_0 \sum_{j = 1}^n f^s_j (r) \langle  \theta_r - \widehat{\theta}_r {,} A_j (r) \rangle_\bb \dif r \right)} \right]\\
&= \sum_{j = 1}^n \int^t_0 \mathbb{E}\left[ \theta_s \circ {\left(  f^s_j (r) \langle \theta_r - \widehat{\theta}_r {,} A_j (r) \rangle_\bb \right)} \right]\dif r.
\end{align*}
To complete the proof of equality (\ref{eq_with_Riesz}) it is sufficient to show
\begin{equation}\label{helper_equality_for_riesz_eq}
\theta_s \circ {\left(  f^s_j \langle(r) \theta_r - \widehat{\theta}_r {,}  A_j (r) \rangle_\bb \right)}
= \left[ \left( \theta_s \circ ( \theta_r - \widehat{\theta}_r ) \right) A_j (r) \right] \circ {f^s_j (r) }, \ \ j = 1, \dots , n. 
\end{equation}
By the definition of tensor product we obtain
\begin{align*}
&\langle \theta_s \circ {\left(  f^s_j (r) \langle \theta_r - \widehat{\theta}_r {,} A_j(r) \rangle_\bb \right)} v {,} b \rangle_V
= \langle \theta_s \langle  f^s_j (r) \langle \theta_r - \widehat{\theta}_r {,} A_j (r) \rangle_\bb {,} v \rangle_\bb  {,} b \rangle_V \\
&= \langle \theta_r - \widehat{\theta}_r {,} A_j(r) \rangle_\bb  \langle  f^s_j (r){,} v \rangle_\bb \langle \theta_s   {,} b \rangle_V
\end{align*}
for all $v \in V^{*}$, $b \in V$.
Similarly we get
\begin{align*}
&\langle \left( \left[ \left( \theta_s \circ ( \theta_r - \widehat{\theta}_r ) \right) A_j (r) \right] \circ {f^s_j (r) } \right) v {,} b \rangle_V
= \langle  \left( \theta_s \circ ( \theta_r - \widehat{\theta}_r ) \right) A_j (r) \langle f^s_j (r) {,}  v \rangle_\bb {,} b \rangle_V \\
&= \langle f^s_j (r) {,} v \rangle_\bb \langle \theta_s \langle \theta_r - \widehat{\theta}_r {,} A_j (r) \rangle_\bb  {,} b \rangle_V  
= \langle \theta_r - \widehat{\theta}_r {,} A_j (r) \rangle_\bb  \langle  f^s_j (r){,} v \rangle_\bb \langle \theta_s   {,} b \rangle_V.
\end{align*}
Which proves equality (\ref{helper_equality_for_riesz_eq}) and, therefore, completes the proof of (\ref{eq_with_Riesz}).

Combining equalities (\ref{zero_op_eq}), (\ref{It_izometrie}) and (\ref{eq_with_Riesz}) we obtain
\begin{equation}\label{main_eq_proof1}
\sum_{j = 1}^n \int^t_0 F^s_j (r) \circ f^s_j (r) \dif r
= \sum_{j = 1}^n \int^t_0 \left( \mathbb{E}\left[ \theta_s \circ ( \theta_r -  \widehat{\theta}_r ) \right] A_j (r) \right) \circ {f^s_j (r)} \dif r.
\end{equation}
The formula (\ref{main_eq_proof1}) holds for any arbitrary square integrable $V^n$ - valued deterministic function  $f^s(r) = (f^s_1 (r), \dots , f^s_n (r))$ hence
\begin{equation}\label{Fj_characterization}
F^s_j (r) =  \mathbb{E}\left[ \theta_s \circ {( \theta_r -  \widehat{\theta}_r )} \right] A_j (r) = \Phi(s,r) A_j (r)
\end{equation}
for all $s \in [0,T]$ and for almost all $r \in [0,T]$.

From (\ref{Ms_characterization}) and (\ref{Fj_characterization}), by the choice $s=t$ and taking into account (\ref{inov_process}) we have 
\begin{align*}
&\widehat{\theta}_t = M^t_t =  \sum_{j = 1}^n \int^t_0 F^t_j (r) \dif \tilde{W}^j_r
= \sum_{j = 1}^n \int^t_0 \Phi(t,r) A_j (r) \dif \tilde{W}^j_r \\
&= \sum_{j = 1}^n \int^t_0 \Phi(t,r)A_j(r) \dif \xi^j_r -  \sum_{j = 1}^n \int^t_0 \Phi(t,r)A_j(r) \langle \widehat{\theta}_r , A_j(r) \rangle_\bb \dif r \\
&= \int^t_0 \Phi(t,s)A^{*}(s) \dif \xi_s -  \int^t_0 \Phi(t,s)A^{*}(s)A(s) \widehat{\theta}_s \dif s
\end{align*}
which concludes the proof of stochastic integral equation (\ref{main_integral_eq}).

Let us verify the formula (\ref{main_cov_eq}). Notice that
\begin{equation}\label{cov_eq_proof1}
\Phi(t,s) = \mathbb{E} \left[\theta_t \circ (\theta_s -  \widehat{\theta}_s) \right] 
= \mathbb{E} \left[ \theta_t \circ \theta_s \right] -  \mathbb{E} \left[ \theta_t  \circ \widehat{\theta}_s \right].
\end{equation}
Using the above proved representation of $\widehat{\theta}_s$, (\ref{observation_process}), (\ref{inov_process}) and independence of $\theta$ and $W$, we have
\begin{align}
&\mathbb{E} \left[ \theta_t \circ \widehat{\theta}_s \right]
= \mathbb{E} \left[ \theta_t \circ {\left( \int^s_0 \Phi(s,r)A^{*}(r) \dif \xi_r -  \int^s_0 \Phi(s,r)A^{*}(r)A(r) \widehat{\theta}_r \dif r \right) } \right]  \nonumber \\
&= \mathbb{E} \left[ \theta_t \circ \left( \int^s_0 \Phi(s,r) A^* (r) A(r) \theta_r \dif r + \int^s_0 \Phi(s,r) A^* (r) \dif W_r - \int^s_0 \Phi(s,r) A^* (r) A(r) \widehat{\theta}_r \dif r \right) \right]  \nonumber \\
&= \mathbb{E} \left[ \theta_t \circ {\left( \int^s_0 \Phi(s,r) A^* (r) A(r) ( \theta_r - \widehat{\theta}_r ) \dif r \right) } \right] 
= \int^s_0  \mathbb{E} \left[ \theta_t \circ {\left( \Phi(s,r) A^* (r) A(r) ( \theta_r - \widehat{\theta}_r ) \right) } \right] \dif r \nonumber \\
&= \int^s_0  \mathbb{E} \left[ \sum_{j = 1}^n  \theta_t  \circ {\left[ \Phi(s,r) A_j(r) \langle  \theta_r - \widehat{\theta}_r {,} A_j(r) \rangle_\bb \right] } \right] \dif r. \label{cov_proof_helper_eq}
\end{align}
For arbitrary $v \in V^{*}$, $b \in V$ we have that
\begin{align*}
&\langle\mathbb{E} \left[ \sum_{j = 1}^n  \theta_t  \circ {\left[ \Phi(s,r) A_j(r) \langle  \theta_r - \widehat{\theta}_r {,} A_j(r) \rangle_\bb \right] } \right]   v {,} b \rangle_V \\
&=  \mathbb{E} \left[ \sum_{j = 1}^n \langle \theta_r - \widehat{\theta}_r , A_j(r) \rangle_\bb \langle  \Phi(s,r) A_j(r) , v \rangle_\bb \langle \theta_t , b \rangle_V \right] \\ 
&=   \mathbb{E} \left[ \sum_{j = 1}^n  \langle \theta_t \langle \theta_r - \widehat{\theta}_r , A_j(r) \rangle_\bb  , b \rangle_V \langle  \Phi(s,r) A_j(r) , v \rangle_\bb \right] \\
&= \langle  \sum_{j = 1}^n   \mathbb{E} \left[ \theta_t \circ \theta_r - \widehat{\theta}_r \right] A_j(r)  \langle  \Phi(s,r) A_j(r) , v \rangle_\bb  , b \rangle_V \\
&= \langle  \sum_{j = 1}^n  \left[ \Phi(t,r)  A_j(r) \circ \Phi(s,r) A_j(r) \right] v  , b \rangle_V.
\end{align*}
By (\ref{cov_proof_helper_eq}) it follows that
\begin{align}\label{strong_continuity_helper_label}
&\mathbb{E} \left[ \theta_t \circ \widehat{\theta}_s \right]
= \sum_{j = 1}^n \int^s_0 \Phi(t,r)  A_j(r) \circ \Phi(s,r) A_j(r) \dif r.
\end{align}
This, together with (\ref{cov_eq_proof1}), proves the formula (\ref{main_cov_eq}).

By (\ref{cov_eq_proof1}) the mapping $t \mapsto \Phi(t,s)$ is strongly continuous on $[0,T]$ uniformly with respect to $s \in [0,t]$ hence the representation (\ref{strong_continuity_helper_label}) implies the mean - square continuity of the proces $\hat{\theta}$ in $V$. By (\ref{strong_continuity_helper_label}) and (\ref{cov_eq_proof1}) the mapping $\Phi$: $\Delta \rightarrow \mathcal{L}(V^*,V)$ is strongly continuous.

It remains to prove equality (\ref{main_var_eq}). For every $v \in V^{*}$ it holds
\begin{align*}
&\mathbb{E} \left[ \widehat{\theta}_t \circ (\theta_t -  \widehat{\theta}_t) \right]v
= \mathbb{E} \left[ \mathbb{E} \left[ \widehat{\theta}_t \langle \theta_t -  \widehat{\theta}_t {,} v \rangle_\bb \mid F^{\xi}_{t} \right] \right] \\
&= \mathbb{E} \left[ \widehat{\theta}_t \left( \langle \mathbb{E} \left[ \theta_t \mid F^{\xi}_{t} \right] {,} v  \rangle_\bb  - \langle  \widehat{\theta}_t {,} v \rangle_\bb \right) \right] = 0,
\end{align*}
 therefore, we obtain
\begin{align*}
\mathbb{E} \left[(\theta_t -  \widehat{\theta}_t) \circ (\theta_t -  \widehat{\theta}_t) \right] 
= \Phi(t,t) + \mathbb{E} \left[ \widehat{\theta}_t \circ (\theta_t -  \widehat{\theta}_t) \right] = \Phi(t,t)
\end{align*}
which completes the proof of Theorem \ref{main_theorem}.
\end{proof}

\section{Uniqueness of the covariance equation}
In the present section we prove uniqueness of solutions to the integral equation (\ref{main_cov_eq}) in the class of operators $$\mathbb{M} = \lbrace \Psi: \Delta \rightarrow \mathcal{L}(V^*,V);\ \Psi \mbox{ strongly continuous};\ \Psi (t,t) \geq 0\ \forall t \in [0,T] \rbrace.$$

More generally, we prove the following.

\begin{Veta}\label{uniqueness_theorem}
Let $K: \Delta \rightarrow \mathcal{L}(V^*,V)$ be a strongly continuous mapping such that $K(t,t) \geq 0$ for each $t \in [0,T]$.

Then the integral equation
\begin{equation} \label{uniqueness_cov_eq}
\Psi(t,s) = K(t,s) - \sum_{j=1}^n \int^s_0 \left( \Psi(t,r)A_j(r) \right) \circ \left( \Psi(s,r) A_j(r) \right) \dif r,\ \ (t,s) \in \Delta
\end{equation}
has at most one solution in the class $\mathbb{M}$.
\end{Veta}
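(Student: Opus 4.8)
The plan is to argue by a Gr\"onwall-type estimate. Suppose $\Psi_1,\Psi_2\in\mathbb{M}$ both solve \eqref{uniqueness_cov_eq} and set $D(t,s)=\Psi_1(t,s)-\Psi_2(t,s)$ for $(t,s)\in\Delta$; the goal is to show $D\equiv 0$. First I would record two preliminaries. Since $\Psi_1,\Psi_2$ are strongly continuous on the compact triangle $\Delta$, for every $v\in V^{*}$ the maps $(t,s)\mapsto\Psi_i(t,s)v$ are continuous, hence bounded, so by the Banach--Steinhaus theorem there is a finite $C$ with $\|\Psi_i(t,s)\|_{\mathcal{L}(V^*,V)}\le C$ for all $(t,s)\in\Delta$ and $i=1,2$. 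Second, from $A(r)b=(\langle b,A_1(r)\rangle_\bb,\dots,\langle b,A_n(r)\rangle_\bb)^T$ and $\|A(r)\|_{\mathcal{L}(V,\R^n)}\le K$ one gets $\|A_j(r)\|_{V^*}\le K$ for every $j$ and $r$, while the elementary bound $\|x\circ y\|_{\mathcal{L}(V^*,V)}\le\|x\|_V\|y\|_V$ (immediate from $(x\circ y)v=x\langle y,v\rangle_\bb$) will be used repeatedly.

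Next I would subtract the two copies of \eqref{uniqueness_cov_eq}; the term $K(t,s)$ cancels, leaving only the nonlinearity. Writing $a_i=\Psi_i(t,r)A_j(r)$ and $b_i=\Psi_i(s,r)A_j(r)$ (all in $V$) and using bilinearity of the tensor product in the form
\[
a_1\circ b_1-a_2\circ b_2=(a_1-a_2)\circ b_1+a_2\circ(b_1-b_2),
\]
together with $a_1-a_2=D(t,r)A_j(r)$ and $b_1-b_2=D(s,r)A_j(r)$, yields
\[
D(t,s)=-\sum_{j=1}^n\int_0^s\Bigl[\bigl(D(t,r)A_j(r)\bigr)\circ\bigl(\Psi_1(s,r)A_j(r)\bigr)+\bigl(\Psi_2(t,r)A_j(r)\bigr)\circ\bigl(D(s,r)A_j(r)\bigr)\Bigr]\dif r.
\]
Taking $\mathcal{L}(V^*,V)$-norms and applying the three estimates above ($\|A_j\|_{V^*}\le K$, $\|\Psi_i\|\le C$, and the tensor bound) gives
\[
\|D(t,s)\|_{\mathcal{L}(V^*,V)}\le nK^2C\int_0^s\bigl(\|D(t,r)\|_{\mathcal{L}(V^*,V)}+\|D(s,r)\|_{\mathcal{L}(V^*,V)}\bigr)\dif r.
\]

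To close the argument I would introduce the running majorant $\rho(s)=\sup\{\|D(t,r)\|_{\mathcal{L}(V^*,V)}:(t,r)\in\Delta,\ r\le s\}$, which is nondecreasing in $s$ (hence measurable) and bounded by $2C$. For $r\le s$ both $\|D(t,r)\|$ (with $t\ge r$) and $\|D(s,r)\|$ are dominated by $\rho(r)$; substituting these into the previous display and taking the supremum over admissible pairs with second coordinate at most $s$ gives $\rho(s)\le 2nK^2C\int_0^s\rho(r)\dif r$. Gr\"onwall's lemma, or a direct iteration producing the factorial bound $\rho(s)\le 2C\,(2nK^2C\,s)^k/k!$ for every $k$, then forces $\rho\equiv0$, i.e.\ $\Psi_1=\Psi_2$ on $\Delta$.

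The main obstacle is the quadratic nonlinearity combined with the genuinely two-variable nature of the unknown: a naive estimate produces the two different norms $\|D(t,r)\|$ and $\|D(s,r)\|$ at the same integration variable $r$, which do not fit a one-variable Gr\"onwall inequality directly. The device that resolves this is the bilinear splitting above, which linearises the increment, followed by passing to the running supremum $\rho$, whose monotonicity both guarantees measurability and packages the two norms into a single scalar function to which Gr\"onwall applies. The only other point needing care is the uniform operator-norm bound $C$, which must be extracted from mere strong continuity via Banach--Steinhaus rather than assumed.
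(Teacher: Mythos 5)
Your proof is correct, and its Gr\"onwall half is essentially the paper's own argument: the same bilinear splitting of $a_1\circ b_1-a_2\circ b_2$ into two increments, the same elementary bounds $\|x\circ y\|_{\mathcal{L}(V^*,V)}\le\|x\|_V\|y\|_V$ and $\|A_j(r)\|_{V^*}\le K$, and a Gr\"onwall iteration for a scalar supremum function (the paper works with $\varphi(s)=\sup_{t\in[s,T]}\|\Psi_1(t,s)-\Psi_2(t,s)\|_{\mathcal{L}(V^*,V)}$ and reduces to $\varphi(s)\le 2C(T)\int_0^s\varphi(r)\dif r$; your running majorant $\rho$ is the cleaner choice, since its monotonicity settles the measurability issue that the paper only gestures at by invoking an ``essential supremum''). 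Where you genuinely diverge is the preliminary boundedness step. The paper does \emph{not} obtain the uniform bound on $\Psi_i$ from Banach--Steinhaus; it invokes the Resonance Theorem only for $K$, and then bounds $\Psi$ through the equation itself: a Cauchy--Schwarz estimate on $\sum_{j}\int_0^s\left(\Psi(t,r)A_j(r)\right)\circ\left(\Psi(s,r)A_j(r)\right)\dif r$ reduces everything to the diagonal quantity $\sum_{j}\int_0^t\langle\left[\left(\Psi(t,r)A_j(r)\right)\circ\left(\Psi(t,r)A_j(r)\right)\right]x{,}x\rangle_\bb\dif r$, which by the equation at $(t,t)$ equals $\langle K(t,t)x{,}x\rangle_\bb-\langle\Psi(t,t)x{,}x\rangle_\bb\le\langle K(t,t)x{,}x\rangle_\bb$, the last inequality using the positivity $\Psi(t,t)\ge0$. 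Your route---strong continuity on the compact set $\Delta$ gives pointwise boundedness of the orbits, and the uniform boundedness principle upgrades this to an operator-norm bound---is valid (it is exactly the argument the paper applies to $K$), and it is both shorter and stronger: it never uses $\Psi(t,t)\ge0$ or $K(t,t)\ge0$, so it in fact proves uniqueness in the larger class of all strongly continuous solutions, not merely in $\mathbb{M}$. What the paper's more laborious estimate buys is a quantitative bound on $\Psi$ controlled by $K$ alone, but nothing of that is needed for uniqueness.
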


\begin{proof}
First, we show that every $\Psi \in \mathbb{M}$ satisfying the equation (\ref{uniqueness_cov_eq}) is bounded on $\Delta$.

Using Cauchy - Schwarz inequality we obtain for arbitrary $x,y \in V^*$
\begin{align}
& | \langle \left(\sum_{j=1}^n \int^s_0 \left( \Psi(t,r)A_j(r) \right) \circ \left( \Psi(s,r) A_j(r) \right) \dif r \right) x {,} y \rangle_\bb| \nonumber\\
& \leq \sum_{j=1}^n \int^s_0 | \langle \left[ \left( \Psi(t,r)A_j(r) \right) \circ \left( \Psi(s,r) A_j(r) \right) \right] x {,} y \rangle_\bb| \dif r \nonumber\\
&  = \sum_{j=1}^n \int^s_0 | \langle \Psi(t,r)A_j(r) \langle \Psi(s,r) A_j(r) , x \rangle_\bb {,} y \rangle_\bb| \dif r  \nonumber\\
&  = \sum_{j=1}^n \int^s_0 | \langle \Psi(t,r)A_j(r) {,} y \rangle_\bb| \ | \langle \Psi(s,r) A_j(r) , x \rangle_\bb | \dif r \nonumber\\
& \leq \sum_{j=1}^n  \left( \int^s_0 \left( \langle \Psi(t,r)A_j(r) {,} y \rangle_\bb \right)^2  \dif r \right)^{\frac{1}{2}} \left( \int_0^s \left( \langle \Psi(s,r) A_j(r) , x \rangle_\bb \right)^2 \right)^{\frac{1}{2}} \nonumber\\
& = \sum_{j=1}^n  \left( \int^s_0 \langle \left[ \left( \Psi(t,r)A_j(r) \right) \circ \left( \Psi(t,r) A_j(r) \right) \right] y {,}y \rangle_\bb \dif r  \right)^{\frac{1}{2}} \nonumber\\
& \ \ \ \ \ \ \ \ \ \ \ \ \ \ \ \ \ \ \ \left( \int^s_0 \langle \left[ \left( \Psi(s,r)A_j(r) \right) \circ \left( \Psi(s,r) A_j(r) \right) \right] x {,} x \rangle_\bb \dif r \right)^{\frac{1}{2}}. \label{Pr2_second_term_podruhe}
\end{align}
In the last inequality we can increase the upper bound of the first integral from $s$ to $t$ because the integrand is nonnegative. Thus all we need is to estimate the term
 $$\int^t_0 \langle \left[ \left( \Psi(t,r)A_j(r) \right) \circ \left( \Psi(t,r) A_j(r) \right) \right] x {,} x \rangle_\bb \dif r $$
 for all $j = 1, \ldots , n$ and $x \in V^*$. Using (\ref{uniqueness_cov_eq}) and the boundedness of the family $\left( K(t,s),\ t,s \in [0,T] \right)$ in $\mathcal{L}(V^*,V)$ (which follows by the Resonance Theorem) we obtain
\begin{align}
&0 \leq \int^t_0 \langle \left[ \left( \Psi(t,r)A_j(r) \right) \circ \left( \Psi(t,r) A_j(r) \right) \right] x {,} x \rangle_\bb \dif r \nonumber\\
&\leq \sum_{j=1}^n \int^t_0 \langle \left[ \left( \Psi(t,r)A_j(r) \right) \circ \left( \Psi(t,r) A_j(r) \right) \right] x {,} x \rangle_\bb \dif r
= \langle K(t,t) x {,} x \rangle_\bb - \langle \Psi(t,t) x {,} x \rangle_\bb \nonumber\\
&\leq \langle K(t,t) x {,} x \rangle_\bb \leq C_1(T) {\| x \|}^2_{V^*}, \ \ t \in [0,T]. \label{Pr2_second_term_potreti}
\end{align}
Now, by (\ref{uniqueness_cov_eq}), (\ref{Pr2_second_term_podruhe}), (\ref{Pr2_second_term_potreti}) and again by the boundedness of the family $\left( K(t,s),\ t,s \in [0,T] \right)$ we have that
\begin{align*}
&|\langle \Psi(t,s) x {,} y \rangle_\bb|  \\
& \leq \| K(t,s)\|_{\mathcal{L}(V^*,V)} \|x\|_{V^*} \|y\|_{V^*}  + | \langle \left(\sum_{j=1}^n \int^s_0 \left( \Psi(t,r)A_j(r) \right) \circ \left( \Psi(s,r) A_j(r) \right) \dif r \right) x {,} y \rangle_\bb| \\
& \leq C_2 (T) {\| x \|}_{V^*} {\| y \|}_{V^*}
\end{align*}
for all $x,y \in V^*$ and $(t,s) \in \Delta$, which proves that the family of operators $\left( \Psi (t,s),\ (t,s) \in \Delta \right)$ is bounded in $\mathcal{L}(V^*,V)$.

Assume that $\Psi_1, \Psi_2 \in \mathbb{M}$ are solutions to the equation (\ref{uniqueness_cov_eq}) and using essential supremum set $\varphi: \ [0,T] \rightarrow [0,\infty)$, $\varphi (s) = \sup_{t \in [s,T]} {\| \Psi_1(t,s) - \Psi_2(t,s) \|}_{\mathcal{L}(V^*,V)}$. In virtue of (\ref{uniqueness_cov_eq}) we have

\begin{align*}
&\varphi (s) = \sup_{t \in [s,T]} {\| \Psi_1(t,s) - \Psi_2(t,s) \|}_{\mathcal{L}(V^*,V)} \\
&= \sup_{t \in [s,T]} {\|  \sum_{j=1}^n \int^s_0 \left( \Psi_1(t,r)A_j(r) \right) \circ \left( \Psi_1(s,r) A_j(r) \right)  - \left( \Psi_2(t,r)A_j(r) \right) \circ \left( \Psi_2(s,r) A_j(r) \right) \dif r  \|}_{\mathcal{L}(V^*,V)} \\
&\leq  \sum_{j=1}^n \int^s_0  \sup_{t \in [s,T]} {\| \left( \Psi_1(t,r)A_j(r) \right) \circ \left( \Psi_1(s,r) A_j(r) \right)  - \left( \Psi_2(t,r)A_j(r) \right) \circ \left( \Psi_2(s,r) A_j(r) \right)  \|}_{\mathcal{L}(V^*,V)} \dif r \\
&= \sum_{j=1}^n \int^s_0  \sup_{t \in [s,T]} {\| \left( \Psi_1(t,r)A_j(r) \right) \circ \left( \Psi_1(s,r) A_j(r) \right)  - \left( \Psi_2(t,r)A_j(r) \right) \circ \left( \Psi_2(s,r) A_j(r) \right)} \\
&\phantom{=} \ \ \ \ \ \ \ \ \ \ \ \ \ \ \ \ \ \ \ \ \ \pm \left( \Psi_1(t,r)A_j(r) \right) \circ \left( \Psi_2(s,r) A_j(r) \right)\|_{\mathcal{L}(V^*,V)} \dif r \\
&= \sum_{j=1}^n \int^s_0  \sup_{t \in [s,T]} {\| \left( \Psi_1(t,r)A_j(r) \right) \circ \left[ \left(  \Psi_1(s,r) - \Psi_2(s,r) \right) A_j(r) \right] \|_{\mathcal{L}(V^*,V)} } \\
&\ \ \ \ \ \ \ \ \ \ + \sup_{t \in [s,T]} \| \left[ \left(  \Psi_1(t,r) - \Psi_2(t,r) \right) A_j(r) \right]  \circ \left( \Psi_2(s,r)A_j(r) \right) \|_{\mathcal{L}(V^*,V)} \dif r \\
&\leq \sum_{j=1}^n \int^s_0  \sup_{t \in [s,T]} {\| \Psi_1(t,r) \|_{\mathcal{L}(V^*,V)} \| A_j(r) \|_{V^*} \| \Psi_1(s,r) - \Psi_2(s,r) \|_{\mathcal{L}(V^*,V)} \| A_j(r) \|_{V^*} } \\
&\ \ \ \ \ \ \ \ \ \ + \sup_{t \in [s,T]} \| \Psi_1(t,r) - \Psi_2(t,r)
\|_{\mathcal{L}(V^*,V)} \| A_j(r) \|_{\mathcal{L}(V^*,V)} \| \Psi_2(s,r) \|_{\mathcal{L}(V^*,V)} \| A_j(r) \|_{V^*} \dif r \\
&\leq C(T) \int^s_0 \sup_{t \in [s,T]} \| \left(\Psi_1(s,r) -  \Psi_2(s,r) \right) \|_{\mathcal{L}(V^*,V)} + \sup_{t \in [s,T]} {\| \left( \Psi_1(t,r) - \Psi_2(t,r) \right) \|}_{\mathcal{L}(V^*,V)}\dif r\\
&\leq 2C(T) \int^s_0 \sup_{t \in [s,T]} \| \left(\Psi_1(t,r) -  \Psi_2(t,r) \right) \|_{\mathcal{L}(V^*,V)}\dif r \\
&\leq 2C(T) \int^s_0 \sup_{t \in [r,T]} \| \left(\Psi_1(t,r) -  \Psi_2(t,r) \right) \|_{\mathcal{L}(V^*,V)}\dif r = 2C(T) \int^s_0 \varphi (r) \dif r.
\end{align*}
Now, by the Gronwall lemma we obtain
 $\sup_{t \in [s,T]} \| \Psi_1(t,s) - \Psi_2(t,s) \|_{\mathcal{L}(V^*,V)} = 0$ for all $0 \leq s \leq T$ and the proof is complete.

\end{proof}

\begin{remark}\label{remark}
Note that if the signal $\theta = \lbrace \theta_t, t \in [0,T] \rbrace$ takes its values in the Hilbert space $H$, the family  $\left( A(s) \right)_{s \in [0,T]}$ of observation operators $H \rightarrow \mathbb{R}^{n}$ can be characterised by $n$ functionals from the dual space of $H$ and no embedding is needed. In this case the equation (\ref{main_cov_eq}) can be expressed using adjoint operators $A^*$ and $\Phi^*$ as
\begin{equation} \label{main_cov_eq-hilb}
    \Phi(t,s) = K^{\theta}(t,s) - \int^s_0 \Phi(t,r)A^*(r) A(r)\Phi^*(s,r) \dif r.
\end{equation}
Indeed we have
\begin{align*}
&\langle \left( \sum_{j=1}^n \int^s_0 \left( \Phi(t,r)A_j(r) \right) \circ \left( \Phi(s,r) A_j(r) \right) \dif r \right) h , k \rangle_H\\
&= \sum_{j=1}^n \int^s_0 \langle \Phi(t,r)A_j(r) \langle \Phi(s,r) A_j(r) , h \rangle_H , k \rangle_H \dif r
\\
&= \sum_{j=1}^n \int^s_0 \langle \Phi(t,r)A_j(r) \langle \Phi^*(s,r) h , A_j(r) \rangle_H , k \rangle_H \dif r\\
&= \langle \left( \int^s_0 \Phi(t,r)A^*(r) A(r)\Phi^*(s,r) \dif r \right) h , k \rangle_H
\end{align*}
for all $h,k \in H$ and all $(t,s) \in \Delta$.
\end{remark}

\section{Examples}

\begin{Priklad}
\textit{Linear Stochastic Evolution Equation driven by Wiener Process}\\
Let the signal $\theta = \lbrace \theta_t, t \in [0,T] \rbrace$ be an $H$ - valued random process defined by stochastic evolution equation
\begin{equation}\label{strong_solution_wiener_example}
\dif \theta_t = \mathcal{A} \theta_s \dif s + G \dif \mathcal{W}_t, \ \ \ \theta_0 = 0,
\end{equation}
where $\mathcal{A}$ is the infinitesimal generator of a strongly continuous semigroup $(S(t)) _{t \in \R_+}$ in $H$, $G \in \mathcal{L}(H)$ and $\lbrace \mathcal{W}_t, t \in [0,T] \rbrace$ is an $H$ - valued standard cylindrical Wiener process defined on a stochastic basis $( \Omega , {F}, {P}, ({F}_t) )$.

Assume $\forall t > 0$, $S(t)G \in \mathcal{L}_2 (H)$ and $\int_0^{T_0} \mid S(r)G \mid^2_{\mathcal{L}_2 (H)} \dif r < \infty$ for some $T_0 > 0$. Then the equation (\ref{strong_solution_wiener_example}) has a unique mean - square continuous solution $\lbrace \theta_t, t \in [0,T] \rbrace$ (cf. \cite{ZabPrat}).
The observation $\xi = \lbrace \xi_t, t \in [0,T] \rbrace$ is given by the equation (\ref{observation_process}) where $V=H$, $\lbrace W_t, t \in [0,T] \rbrace$ is a $\R^n$-valued Wiener process on  $( \Omega , {F}, {P}, ({F}_t) )$ independent of $\lbrace \mathcal{W}_t, t \in [0,T] \rbrace$ and $A: [0,T] \rightarrow \mathcal{L}(H,\R^n)$ is strongly measurable and bounded.

For all $0 \leq r \leq t \leq T$ the process $\theta_t$ satisfies
\begin{equation}\label{theta_solution}
\theta_t = S(t-r) \theta_r + \gamma_{t,r},
\end{equation}
where $\gamma_{t,r} = \int_r^t S(t-s) G \dif\mathcal{W}_s$ is a well defined centered Gaussian variable in $H$ and is independent of $\theta_r$ (see \cite{ZabPrat}).

In this case the equations (\ref{main_integral_eq}) and (\ref{main_cov_eq}) simplify to the infinte - dimensional analogue of standard Kalman - Bucy filter as shown in the following theorem.

\begin{Veta}\label{theorem_wiener-KB}
The filter $\widehat{\theta}$ satisfies the stochastic differential equation
  \begin{equation}\label{main_eq_wiener}
  \dif \widehat{\theta}_t = \mathcal{A} \widehat{\theta}_t \dif t + \Phi(t) A^{*}(t) \left( \dif \xi_t - A(t) \widehat{\theta}_t \dif t \right), \ \ t \in [0,T],
  \end{equation}
where the solution to (\ref{main_eq_wiener}) is understood in the mild sense, i.e. $ \widehat{\theta}_t$ solves the equation
\begin{equation}\label{main_eq_wiener_mild}
 \widehat{\theta}_t = \int^t_0 S(t-s) \Phi(s) A^{*}(s) \dif \xi_s -   \int^t_0 S(t-s) \Phi(s) A^{*}(s)A(s) \widehat{\theta}_s \dif s. \\
\end{equation}
The family of operators $\left( \Phi(t) \right)_{t \in [0,T]}$: $H \rightarrow H$ defined as $\Phi(t) =\mathbb{E} \left[(\theta_t -  \widehat{\theta}_t) \circ (\theta_t -  \widehat{\theta}_t) \right]$ for all $t \in [0,T]$ satisfies the differential equation
    \begin{equation*}
    \dot{\Phi}(t) = \mathcal{A}  \Phi(t) +  \Phi(t) \mathcal{A}^{*} -  \Phi(t)A^*(t) A(t)\Phi(t) + Q, \quad Q = G G^*
    \end{equation*}
in the weak sense, that is
\begin{equation}\label{main_cov_eq_wiener_weak}
\dfrac{\dif}{\dif t} \langle \Phi(t) x , y \rangle
 = \langle \Phi(t) x , \mathcal{A}^{*} y \rangle +   \langle \mathcal{A}^{*} x , \Phi(t) y \rangle -  \langle A(t)\Phi(t) x , A(t)\Phi(t) y \rangle + \langle Qx , y \rangle
\end{equation}
for all $x,y \in Dom(\mathcal{A}^*)$.
\end{Veta}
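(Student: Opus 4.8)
The plan is to specialise the general identities (\ref{main_integral_eq}), (\ref{main_cov_eq-hilb}) and (\ref{main_var_eq}) to the Wiener case, the whole argument resting on a single structural reduction. Observe first that here $V=H$, so $V^{*}=H$ and all tensor products lie in $\mathcal{L}(H)$; moreover $\Phi(t):=\Phi(t,t)$, being by (\ref{main_var_eq}) the covariance operator of $\theta_t-\widehat{\theta}_t$, is self-adjoint and nonnegative. The key claim I would establish is that for $(t,s)\in\Delta$
\[
\Phi(t,s)=S(t-s)\Phi(s),\qquad K^{\theta}(t,s)=S(t-s)K^{\theta}(s,s).
\]
Both follow from the representation (\ref{theta_solution}): writing $\theta_t=S(t-s)\theta_s+\gamma_{t,s}$, the increment $\gamma_{t,s}$ is centered and, being measurable with respect to the increments of $\mathcal{W}$ on $[s,t]$, is independent of the pair $(\theta_s,\widehat{\theta}_s)$ (indeed $\theta_s$ is generated by $\mathcal{W}$ on $[0,s]$, while $\widehat{\theta}_s$ is $F^{\xi}_{s}$-measurable, hence generated by $\mathcal{W}$ and $W$ on $[0,s]$). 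Since $S(t-s)$ is a bounded deterministic operator and $L(x\circ y)=(Lx)\circ y$ for $L\in\mathcal{L}(H)$, expanding $\theta_t\circ(\theta_s-\widehat{\theta}_s)$ and taking expectations annihilates the $\gamma_{t,s}$-term and yields $\Phi(t,s)=S(t-s)\,\mathbb{E}[\theta_s\circ(\theta_s-\widehat{\theta}_s)]=S(t-s)\Phi(s)$; the identity for $K^{\theta}$ is obtained identically.

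With this reduction the filter equation is immediate: substituting $\Phi(t,s)=S(t-s)\Phi(s)$ into (\ref{main_integral_eq}) turns it verbatim into (\ref{main_eq_wiener_mild}), and (\ref{main_eq_wiener_mild}) is by definition the mild (variation-of-constants) form of the differential equation (\ref{main_eq_wiener}). Since the theorem interprets (\ref{main_eq_wiener}) in the mild sense, nothing further is required for the filter beyond this bookkeeping.

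For the covariance I would use the Hilbert-space form (\ref{main_cov_eq-hilb}), legitimate because $V=H$. Recall that $K^{\theta}(s,s)=Q_s:=\mathbb{E}[\theta_s\circ\theta_s]=\int_0^s S(s-r)QS^{*}(s-r)\dif r$ with $Q=GG^{*}$, being the covariance of the stochastic convolution $\theta_s$. Setting $t=s$ in (\ref{main_cov_eq-hilb}) and inserting $\Phi(s,r)=S(s-r)\Phi(r)$ and $\Phi^{*}(s,r)=\Phi(r)S^{*}(s-r)$ (self-adjointness of $\Phi(r)$) together with the expression for $Q_s$, everything collapses to the mild Riccati equation
\[
\Phi(s)=\int_0^s S(s-r)\left[\,Q-\Phi(r)A^{*}(r)A(r)\Phi(r)\,\right]S^{*}(s-r)\dif r.
\]

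It remains to pass from this mild identity to the weak differential equation (\ref{main_cov_eq_wiener_weak}). Fixing $x,y\in Dom(\mathcal{A}^{*})$, I would write $\langle\Phi(t)x,y\rangle$ as the scalar integral $\int_0^t\langle[Q-\Phi(r)A^{*}(r)A(r)\Phi(r)]S^{*}(t-r)x,S^{*}(t-r)y\rangle\dif r$ and differentiate in $t$. The boundary term at $r=t$ gives $\langle Qx,y\rangle-\langle A(t)\Phi(t)x,A(t)\Phi(t)y\rangle$, while differentiating the integrand through $\tfrac{\dif}{\dif t}S^{*}(t-r)x=S^{*}(t-r)\mathcal{A}^{*}x$ (valid on $Dom(\mathcal{A}^{*})$, which $S^{*}(t-r)$ leaves invariant) and reassembling the convolution produces $\langle\Phi(t)x,\mathcal{A}^{*}y\rangle+\langle\mathcal{A}^{*}x,\Phi(t)y\rangle$, where self-adjointness of $\Phi(t)$ is used to rewrite $\langle\Phi(t)\mathcal{A}^{*}x,y\rangle$. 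Summing these recovers (\ref{main_cov_eq_wiener_weak}) exactly. I expect the main obstacle to be the analytic justification of this last step, namely that $t\mapsto\langle\Phi(t)x,y\rangle$ is differentiable and that differentiation under the integral sign is legitimate; this relies on the strong continuity and local boundedness of $r\mapsto\Phi(r)$ (inherited from the strong continuity of $\Phi$ in Theorem \ref{main_theorem}) and of $r\mapsto A^{*}(r)A(r)$ (from $\|A(t)\|_{\mathcal{L}(H,\mathbb{R}^{n})}\le K$), together with the restriction $x,y\in Dom(\mathcal{A}^{*})$ which keeps the differentiated integrand dominated on $[0,t]$.
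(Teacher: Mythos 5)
Your proposal is correct and follows essentially the same route as the paper: both hinge on the identity $\Phi(t,s)=S(t-s)\Phi(s)$ obtained from the representation $\theta_t=S(t-s)\theta_s+\gamma_{t,s}$ and the independence of $\gamma_{t,s}$ from $(\theta_s,\widehat{\theta}_s)$, substituted into (\ref{main_integral_eq}) and (\ref{main_cov_eq-hilb}), together with $K^{\theta}(t,t)=\int_0^t S(t-r)QS^{*}(t-r)\dif r$ to reach the mild Riccati equation. The only difference is cosmetic: where the paper invokes a reference (\cite{CP}) for the equivalence of the mild form with the weak equation (\ref{main_cov_eq_wiener_weak}), you carry out that differentiation argument explicitly (and correctly), and you are in fact slightly more careful than the paper in justifying the independence of $\gamma_{t,s}$ from $\widehat{\theta}_s$, not just from $\theta_s$.
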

\begin{proof}
Note that the operator $\Phi(t)$ is in fact the operator $\Phi(t,s)$ defined in theorem (\ref{main_theorem}) when $s = t$ and is selfadjoint. Set $\Phi(t) = \Phi(t,t)$ and $K(t) = K^{\theta}(t,t)$ on $t \in [0,T]$.

Using (\ref{main_integral_eq}), (\ref{theta_solution}), the independence of $\gamma_{t,s}$ and $\theta_s$ for all $0 \leq s \leq t$ and the definition of operator $\Phi(t,s)$ in theorem (\ref{main_theorem}) we obtain 
\begin{align*}
&\widehat{\theta}_t 
= \int^t_0 \mathbb{E} \left[ \left( S(t-s) \theta_s + \gamma_{t,s} \right) \circ (\theta_s - \widehat{\theta}_s) \right] A^{*}(s) \dif \xi_s \\
 & \ \ \ \ \ \ \ \ \ \ \ \ \ 
 -  \int^t_0 \mathbb{E} \left[ \left( S(t-s) \theta_s + \gamma_{t,s} \right) \circ (\theta_s - \widehat{\theta}_s) \right] A^{*}(s)A(s) \widehat{\theta}_s \dif s \\
&= \int^t_0 S(t-s) \mathbb{E} \left[ \theta_s \circ (\theta_s - \widehat{\theta}_s) \right] A^{*}(s) \dif \xi_s -  \int^t_0 S(t-s) \mathbb{E} \left[ \theta_s \circ (\theta_s - \widehat{\theta}_s) \right] A^{*}(s)A(s) \widehat{\theta}_s \dif s \\
& =  \int^t_0 S(t-s) \Phi(s) A^{*}(s) \dif \xi_s -   \int^t_0 S(t-s) \Phi(s) A^{*}(s)A(s) \widehat{\theta}_s \dif s \\
\end{align*}
for all $t \in [0,T]$ which is exactly (\ref{main_eq_wiener_mild}).

Using the same arguments as above, equation (\ref{main_cov_eq-hilb}) and self - adjointness of the operator $\Phi(t)$, $t \in [0,T]$ we have
\begin{align*}
&\Phi(t) = \\
& K(t) - \int^t_0 \mathbb{E} \left[ \left( S(t-r) \theta_r + \gamma_{t,r} \right) \circ (\theta_r - \widehat{\theta}_r) \right]  A^*(r) A(r) \left( \mathbb{E} \left[ \left( S(t-r) \theta_r + \gamma_{t,r} \right) \circ (\theta_r - \widehat{\theta}_r) \right] \right)^* \dif r \\
&= K(t) - \int^t_0  S(t-r) \mathbb{E} \left[ \theta_r \circ (\theta_r - \widehat{\theta}_r) \right]  A^*(r) A(r) \left( S(t-r) \mathbb{E} \left[  \theta_r \circ (\theta_r - \widehat{\theta}_r) \right] \right)^* \dif r\\
&= K(t) - \int^t_0 S(t-r) \Phi(r) A^*(r) A(r)\Phi(r) {S^*(t-r)} \dif r .
\end{align*}
Since the covariance operator $K(t), \ t \in [0,T]$  is a mild solution to the equation
\begin{align*}
\dot{K}(t) = \mathcal{A} K(t) + K(t) \mathcal{A}^{*} + Q, \ \ \ \ K(0) = 0,
\end{align*}
which takes the form
\begin{align*}
K(t) = \int_0^t S(t-r) Q S^{*}(t-r) \dif r
\end{align*}
we obtain
\begin{align*}
&\Phi(t) =\int^t_0 S(t-r) \left( Q - \Phi(r) A^*(r) A(r)\Phi(r) \right) {S^*(t-r)} \dif r 
\end{align*}
which is known to be equivalent to the weak form of the equation (\ref{main_cov_eq_wiener_weak}) (cf. \cite{CP}).
\end{proof}

\end{Priklad}

Now, let us consider a signal defined by linear stochastic evolution equation driven by a regular Gauss - Volterra noise. First, we summarise a few basic facts from the infinte - dimensional theory of such processes (see \cite{CoMa}, \citep{CMO}, \cite{Coup} for more details, see also \cite{CMS} for a brief survey).

By scalar Gauss - Volterra process (called also $\alpha$ - regular Volterra process) we understand a centered Gaussian process $\lbrace b_t , t \in [0,T] \rbrace$, $b_0 = 0$, the covariance of which takes the form
\begin{equation*}
\E [b_t b_s] = R(s,t) = \int_0^{s \wedge t} K(s,r)K(t,r) \dif r, \ \ \ s,t \geq 0,
\end{equation*}
where the kernel  $\left( K(t,r) , (t,r) \in \Delta \right)$ satisfies
\begin{align*}
&(i)\ \ \ K(0,0) = 0,\\
&(ii) \ \ \ K( \cdot , r) \in C^1 ([r,T]), \ \ \  0 \leq r \leq T,\\
&(iii) \ \ \ \exists \ \alpha \in \left( 0,\dfrac{1}{2} \right); \ \ \bigg\vert \dfrac{\dif K}{\dif u}(u,r) \bigg\vert < C(u-r)^{\alpha-1}\left( \dfrac{u}{r} \right)^{\alpha}. \ \ \ \ \ \ \ \ \ \ \ \ \ \ \ \ \ \ \ \ \ \ \ \ \ \ \ 
\end{align*}
Note that in virtue of the Kolmogorov continuity criterion the process $\left( b_t , t \in [0,T] \right)$ has an\\
$\epsilon$ - H\"{o}lder version for $\epsilon < \alpha +\dfrac{1}{2}$ \  (cf. \cite{CMO}, Remark 2.1).

An important example of $\alpha$ - regular Gauss - Volterra process is the fractional Brownian motion with the Hurst parameter $h > \dfrac{1}{2}$, in which case $\alpha = h - \dfrac{1}{2}$ and
\begin{equation*}
K(t,s) = K^h(t,s) = C_h \int_s^t \left(\dfrac{u}{s} \right)^{h - \frac{1}{2}} (u-s)^{h-\frac{3}{2}} \dif u,
\end{equation*}
where $c_h$ is a suitable constant (see \cite{alNualart}).

Another example is the multifractional Brownian motion where the Hurst parameter $h = h(t)$ is a function of time. For the definition and conditions on $h$ which ensure that the process satisfies the above assumptions see \cite{CoMa}, Example 2.14.

Now, given a separable Hilbert space $H$ and a stochastic basis $(\Omega , F,P,(F_t) )$, the cylindrical Gauss - Volterra process  $\lbrace B_t , t \in [0,T] \rbrace$ on $H$ is defined by the formal series
\begin{equation}\label{cylindrical_volterra}
B_t = \sum_{n = 1}^{\infty} \beta_n(t) e_n
\end{equation}
where $\lbrace e_n, n \in \N \rbrace$ is an orthonormal basis in $H$ and $\lbrace \beta_n (t) , t \in [0,T] \rbrace_{n \in \N}$ is a sequence of independent scalar $\alpha$ - regular Gauss - Volterra processes with the same kernel $K$. The series (\ref{cylindrical_volterra}) does not converge in the space $H$ but defines the system of scalar processes $\lbrace B(h) , h \in H \rbrace$, 
$$B_t (h) = \sum_{n=1}^{\infty} \langle e_n, h \rangle \beta_n (t)$$ (for definitions and basic properties of cylindrical Volterra processes and stochastic integrals driven by them see \cite{CoMa}, \cite{CMO}).

Suppose that the signal satisfies the equation
\begin{equation}\label{Volterra_process-signal}
\dif \theta_t = \mathcal{A} \theta_s \dif s + G \dif B_t, \ \ \ t \in [0,T], \ \ \ \theta_0 = 0,
\end{equation}
where $\mathcal{A}$ is infinitesimal generator of a strongly continuous semigroup $(S(t)) _{t \in \R+}$ in $H$ and $G \in \mathcal{L}(H)$.

By the analyticity of semigroup $S$ there exists $\lambda \in \R$ such that the operator $(\lambda I -  \mathcal{A})$ is strictly positive. Therefore, for  $\delta \geq 0$, we can define the Hilbert space
\begin{equation*}
V_{\delta} = Dom((\lambda I -  \mathcal{A})^{\delta})
\end{equation*}
equipped with the graph norm topology.

The solution $\lbrace \theta_t, t \in [0,T] \rbrace$ to the equation (\ref{Volterra_process-signal}) is understood in the mild sense, that is,
\begin{equation*}
\theta_t = \int_0^t S(t-r) G \dif B_r, \quad t \in [0,T].
\end{equation*}

As a particular case of Corollary 4.1. in \cite{CMO} we obtain the following statement.
\begin{Veta}
Assume $S(u)G \in \mathcal{L}_2(H)$, $u \in [0,T]$, and let there exist a $\gamma \in [0, \alpha + 1/2)$ such that 
\begin{equation}\label{condition_op_G}
\mid S(u)G \mid_{\mathcal{L}_2(H)} \leq cu^{-\gamma}, \quad u\in [0,T]
\end{equation}
for a constant $c>0$. Then $\lbrace \theta_t, t \in [0,T] \rbrace$ has a continuou version in the space $V_{\delta}$ for
\begin{equation}\label{condition_alpha,gamma,delta}
0 \leq \delta < \alpha + \frac{1}{2} - \gamma.
\end{equation}
If $G \in \mathcal{L}_2(H)$ (which corresponds to the case when the driving noise in (\ref{Volterra_process-signal}) may be represented by a genuine $H$-valued Gauss-Volterra process), the condition (\ref{condition_op_G}) is satisfied with $\gamma = 0$ and (\ref{condition_alpha,gamma,delta}) reads $0 \leq \delta < \alpha + 1/2$ (for the fractional Brownian motion $0 \leq \delta < h$).
\end{Veta}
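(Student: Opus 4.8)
The statement is obtained by specialising the regularity result for stochastic convolutions driven by cylindrical Gauss--Volterra noise, so the plan is to verify the hypotheses of Corollary 4.1 in \cite{CMO} and to read off the admissible range of $\delta$ from \Ref{condition_op_G}. Let me indicate the content behind that corollary, which is a second-moment estimate for the $V_\delta$-valued convolution together with the Kolmogorov continuity criterion.

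First I would check that for each fixed $t$ the integral $\theta_t = \int_0^t S(t-r) G \dif B_r$ defines a centered Gaussian random variable in $V_\delta$. Since $\mathcal{A}$ generates an analytic semigroup, the fractional powers satisfy $\| (\lambda I - \mathcal{A})^{\delta} S(u) \|_{\mathcal{L}(H)} \le C u^{-\delta}$ for $u > 0$; splitting $S(u) = S(u/2) S(u/2)$ and using \Ref{condition_op_G} yields
\[
\mid (\lambda I - \mathcal{A})^{\delta} S(u) G \mid_{\mathcal{L}_2(H)} \le C\, u^{-\delta-\gamma}, \qquad u \in (0,T].
\]
Feeding this bound into the It\^o-type isometry for Gauss--Volterra integrals and its attendant kernel estimates --- in which the $\alpha$-regularity of the kernel encoded in assumption (iii) raises the integrability threshold by $\alpha + \tfrac12$ --- the convolution is well defined in $V_\delta$ precisely when $\delta + \gamma < \alpha + \tfrac12$, that is, under \Ref{condition_alpha,gamma,delta}.

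The main work is the increment estimate, and this is where I expect the genuine obstacle. For $0 \le t' < t \le T$ I would split
\[
\theta_t - \theta_{t'} = \int_{t'}^{t} S(t-r) G \dif B_r + \int_0^{t'} \left[ S(t-r) - S(t'-r) \right] G \dif B_r
\]
and bound the $V_\delta$ second moment of each term by $C\, |t-t'|^{2\beta}$ for some $\beta > 0$. The first term is controlled by the Hilbert--Schmidt bound above integrated against the singular kernel over the short interval $[t',t]$. For the second I would use the analytic estimate $\| (S(t-t')-I)(\lambda I - \mathcal{A})^{\delta} S(u) \|_{\mathcal{L}(H)} \le C\, |t-t'|^{\eta} u^{-\delta-\eta}$ with a small $\eta > 0$, combined once more with the Volterra isometry. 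The strict gap between $\delta + \gamma$ and $\alpha + \tfrac12$ furnished by \Ref{condition_alpha,gamma,delta} is exactly what leaves room for a positive H\"older exponent $\beta$; the delicate part is the bookkeeping of the singular kernel inside the isometry, which is what Corollary 4.1 in \cite{CMO} packages.

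Finally, since $\{\theta_t\}$ is Gaussian in the Hilbert space $V_\delta$, the second-moment increment bound upgrades to $\E \| \theta_t - \theta_{t'} \|_{V_\delta}^{2p} \le C_p\, |t-t'|^{2p\beta}$ for every $p \ge 1$; taking $p$ with $2p\beta > 1$ and invoking the Kolmogorov continuity theorem produces a continuous (indeed H\"older) version of $\theta$ in $V_\delta$. For the special case $G \in \mathcal{L}_2(H)$ one has $\mid S(u) G \mid_{\mathcal{L}_2(H)} \le \| S(u) \|_{\mathcal{L}(H)} \mid G \mid_{\mathcal{L}_2(H)} \le C \mid G \mid_{\mathcal{L}_2(H)}$, so \Ref{condition_op_G} holds with $\gamma = 0$ and the range collapses to $0 \le \delta < \alpha + \tfrac12$, which for fractional Brownian motion with $\alpha = h - \tfrac12$ reads $0 \le \delta < h$. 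With the kernel estimate in hand the remainder is the standard Kolmogorov argument.
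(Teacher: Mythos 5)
Your proposal is correct and takes essentially the same route as the paper: the paper gives no proof of its own, obtaining the statement directly as a particular case of Corollary 4.1 in \cite{CMO}, which is exactly the specialization you carry out (including the observation that $G \in \mathcal{L}_2(H)$ yields $\gamma = 0$ and hence $0 \leq \delta < \alpha + 1/2$). Your additional sketch of the fractional-power semigroup estimates, the Volterra isometry bookkeeping and the Kolmogorov argument underlying that corollary is sound, but it reconstructs machinery the paper simply cites rather than proves.
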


Note that for fractional Brownian motion the proposition holds true even if $h < \frac{1}{2}$ (cf. \cite{DMP}).

\begin{Priklad}\label{final_example}
Consider the signal given by the following parabolic equation 
\begin{equation}\label{volterra_parabolic_eq}
\partial_t u = L_{2m} u + \eta
\end{equation}
on $[0,T] \ \times \ \mathcal{D}$ with initial condition $u(0, \cdot) = 0$ and with the Dirichlet boundary condition
\begin{equation*}
\dfrac{\partial^k u}{\partial \mathbf{x}^k}\bigg\vert_{[0,T] \ \times \ \partial \mathcal{D}} = 0,
\quad k \in \lbrace0, \cdots , m-1 \rbrace,
\end{equation*}
where $\dfrac{\partial}{\partial \mathbf{x}^k}$ denotes the conormal derivative. The domain $\mathcal{D} \subset \R^d$ is open and bounded with smooth boundary and $L_{2m}$ is a differential operator uniformly elliptic of order $2m$,
\begin{equation}\label{Laplace_operator}
L_{2m} = \sum_{|k| \leq 2m} a_k(\cdot) \partial^k
\end{equation}
with $a_k \in C^{\infty}_b(\mathcal{D})$. The noise $\eta$ is Gauss - Volterra in time and may be white (if $G$ is identity operator) or correlated in space. This system can be reformulated as the stochastic evolution equation (\ref{Volterra_process-signal}) in $H = L^2(\mathcal{D})$. Indeed, the noise $\eta$ is formally given as
\begin{equation*}
\eta(t,\cdot) = G \dfrac{\dif}{\dif t} B_t,
\end{equation*}
where $\lbrace B_t , t \in [0,T] \rbrace$ is a Gauss - Volterra process on $H$ and $\mathcal{A} = L_{2m}\big\vert_{Dom(\mathcal{A})}$ where
\begin{equation}\label{Laplace_operator_domain}
Dom(\mathcal{A}) = \bigg\lbrace f \in W^{2m,2}(\mathcal{D}) : \dfrac{\partial^k f}{\partial \mathbf{x}^k} = 0\ \text{on} \ \partial \mathcal{D}\ \text{for} \ k \in  \lbrace0, \cdots , m-1 \rbrace \bigg\rbrace.
\end{equation}
The operator $\mathcal{A}$ generates an analytic semigroup $(S(t), t\in [0,T])$ on $H$.

If the observation is given by the equation (\ref{observation_process}) where $V=H=L^2(\mathcal{D})$, $\lbrace W_t, t \in [0,T] \rbrace$ is independent of $\lbrace B_t, t \in [0,T] \rbrace$ and $A: [0,T] \rightarrow \mathcal{L}(L^2(\mathcal{D}),\R^n)$ is strongly measurable and bounded, Theorems \ref{main_theorem}, \ref{uniqueness_theorem} and Remark \ref{remark} may be applied.

However, it may be interesting to consider the case when the only accessible information comes from observation of the signal at given points $z_1, \ldots, z_n \in \mathcal{D}$. Then the signal process has to be more regular.

For simplicity, assume that $m = 1$, i.e. (\ref{volterra_parabolic_eq}) is a stochastic heat equation. If the condition (\ref{condition_alpha,gamma,delta}) is satisfied and, moreover 
\begin{equation*}
\delta > \dfrac{d}{4}
\end{equation*}
then the signal $\theta$ has a continuous version in $V_\delta$ (cf. \cite{CoMa}, \cite{CMO}) that is continuously embedded into $C(\mathcal{D})$ by the Sobolev theorem. This follows from the fact that $V_\delta \subset W^{2\delta,2}(\mathcal{D})$ (cf.  \cite{VdeltaEmbedd}). Such a choice of $\delta$ is possible if
\begin{equation}\label{C_embedding_crit}
\alpha + \frac{1}{2} - \gamma > \frac{d}{4}
\end{equation}
(note that if $G \in \mathcal{L}_2(H)$ we may put $\gamma = 0$ and, on the other hand, for $\gamma = d/4$ we may consider arbitrary $G \in \mathcal{L}(H)$).

Then in Theorems \ref{main_theorem} and \ref{uniqueness_theorem} we may put $V = V_\delta \hookrightarrow C(\mathcal{D})$ and as an example of observation operator we may take $A(t) = A$ defined as
\begin{equation}\label{pointwise_operator}
A \theta_t = (\theta_t(z_1), \dots , \theta_t(z_n)),
\end{equation}
where $z_1, \dots ,z_n \in \mathcal{D}$, which corresponds to pointwise observation of the signal process $\theta$.

In this case the equations (\ref{main_integral_eq}) and (\ref{main_cov_eq}) can be rewritten according to the following theorem.

\begin{Dusledek}\label{final_corollary}
Let the signal $\theta = \lbrace \theta_t, t \in [0,T] \rbrace$ satisfy the stochastic evolution equation
\begin{equation*}
\dif \theta_t = \mathcal{A} \theta_s \dif s + G \dif B_t, \ \ \ \theta_0 = 0,
\end{equation*}
where $\lbrace B_t , t \in [0,T] \rbrace$ is a Gauss - Volterra process on $L^2(\mathcal{D})$, $G \in \mathcal{L}(L^2(\mathcal{D}))$ and $\mathcal{A} = L_{2}\big\vert_{Dom(\mathcal{A})}$ is given by (\ref{Laplace_operator}) and (\ref{Laplace_operator_domain}). Further assume that condition (\ref{C_embedding_crit}) holds. Consider the observation process $\xi = \lbrace \xi_t, t \in [0,T] \rbrace$ given by (\ref{observation_process}) with operator $A(t) = A$ defined by (\ref{pointwise_operator}).
Then the filter $\widehat{\theta}$ satisfies stochastic integral equation
\begin{equation}\label{main_integral_eq_pointwise}
  \widehat{\theta}_t = \sum_{j=1}^n \int^t_0 \Phi_{z_j}(t,s) \dif \xi^j_s -  \sum_{j=1}^n \int^t_0 \Phi_{z_j}(t,s) \widehat{\theta}_s (z_j) \dif s, \ \ t \in [0,T],
\end{equation}
where $\Phi_{z_i}$: $\Delta \rightarrow C(\mathcal{D})$ is defined as $\Phi_{z_i}(t,s) = \mathbb{E}[(\theta_s -  \widehat{\theta}_s)(z_i) \theta_t]$ for all $(t,s) \in \Delta$, $i = 1, \dots , n$ and integral equation
\begin{equation} \label{main_cov__pointwise}
    \Phi_{z_i}(t,s) = \E [\theta_s(z_i) \theta_t] - \sum_{j=1}^n \int^s_0 \Phi_{z_j}(s,r)(z_i) \Phi_{z_j}(t,r) \dif r, \quad i = 1, \dots , n
\end{equation}
is satisfied.
\end{Dusledek}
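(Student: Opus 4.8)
The plan is to obtain the Corollary as a direct specialization of Theorem~\ref{main_theorem}, taking $V = V_\delta$ and letting $A_1,\dots,A_n \in V^{*}$ be the point-evaluation functionals at $z_1,\dots,z_n$. First I would check that the abstract hypotheses of Theorem~\ref{main_theorem} hold in this concrete setting. Condition (\ref{C_embedding_crit}) permits a choice of $\delta$ with $d/4 < \delta < \alpha + \frac{1}{2} - \gamma$; for such $\delta$ the signal $\theta$ admits a continuous, centered, mean-square continuous Gaussian version in $V_\delta$, and the embedding $V_\delta \subset W^{2\delta,2}(\mathcal{D}) \hookrightarrow C(\mathcal{D})$ guarantees that each evaluation $b \mapsto b(z_i)$ is a bounded linear functional on $V_\delta$. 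Hence the constant operator $A$ from (\ref{pointwise_operator}) lies in $\mathcal{L}(V_\delta,\R^n)$, is trivially strongly measurable and bounded, and decomposes through functionals $A_i \in V^{*}$ with $\langle b, A_i \rangle_\bb = b(z_i)$, so that $A^{*}(s)z = \sum_{i=1}^n z_i A_i$ as required by the framework.

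The key algebraic observation is the identification $\Phi(t,s) A_j = \Phi_{z_j}(t,s)$: by the definition of the tensor product and of $A_j$,
\[
\Phi(t,s) A_j = \E\bigl[\theta_t \langle \theta_s - \widehat{\theta}_s, A_j \rangle_\bb \bigr] = \E\bigl[(\theta_s - \widehat{\theta}_s)(z_j)\, \theta_t\bigr] = \Phi_{z_j}(t,s),
\]
which is exactly the $V$-valued object appearing in the statement; likewise $K^{\theta}(t,s) A_i = \E[\theta_s(z_i)\theta_t]$. These identities are what convert the operator-valued quantities of Theorem~\ref{main_theorem} into the pointwise quantities of the Corollary.

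With these identifications, equation (\ref{main_integral_eq_pointwise}) follows from (\ref{main_integral_eq}) by substitution: writing $A^{*}(s)\dif\xi_s = \sum_{j=1}^n A_j\,\dif\xi^j_s$ and $A^{*}(s)A(s)\widehat{\theta}_s = \sum_{j=1}^n \widehat{\theta}_s(z_j) A_j$ and applying $\Phi(t,s)$, the two integrals in (\ref{main_integral_eq}) become $\sum_{j=1}^n \int_0^t \Phi_{z_j}(t,s)\,\dif\xi^j_s$ and $\sum_{j=1}^n \int_0^t \Phi_{z_j}(t,s)\,\widehat{\theta}_s(z_j)\,\dif s$. To obtain (\ref{main_cov__pointwise}) I would apply the functional $A_i$ on the right of the covariance equation (\ref{main_cov_eq}); using the tensor-product definition,
\[
\bigl[(\Phi(t,r)A_j)\circ(\Phi(s,r)A_j)\bigr]A_i = \langle \Phi(s,r)A_j, A_i\rangle_\bb\,\Phi(t,r)A_j = \Phi_{z_j}(s,r)(z_i)\,\Phi_{z_j}(t,r),
\]
together with $\Phi(t,s)A_i = \Phi_{z_i}(t,s)$ and $K^{\theta}(t,s)A_i = \E[\theta_s(z_i)\theta_t]$, reduces (\ref{main_cov_eq}) precisely to the stated form.

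Since the full analytic and probabilistic machinery is already contained in Theorem~\ref{main_theorem}, I do not expect a genuine obstacle in the final reductions, which are purely algebraic manipulations of the tensor product against the evaluation functionals. The only point requiring real care — and in fact the entire content of the Corollary — is the verification that the abstract framework applies at all, namely that the regularity guaranteed by (\ref{C_embedding_crit}) forces $V_\delta \hookrightarrow C(\mathcal{D})$ so that the pointwise observations define bounded functionals in $V^{*}$. Once this embedding is in place, the remainder is a faithful transcription of (\ref{main_integral_eq}) and (\ref{main_cov_eq}).
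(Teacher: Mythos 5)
Your proposal is correct and follows essentially the same route as the paper: the Corollary is obtained by specializing Theorem~\ref{main_theorem} to $V = V_\delta \hookrightarrow C(\mathcal{D})$ with the evaluation functionals $A_i$, using the identifications $\Phi(t,s)A_j = \Phi_{z_j}(t,s)$ and $K^{\theta}(t,s)A_i = \E[\theta_s(z_i)\theta_t]$ together with the tensor-product identity $\bigl[(\Phi(t,r)A_j)\circ(\Phi(s,r)A_j)\bigr]A_i = \Phi_{z_j}(s,r)(z_i)\,\Phi_{z_j}(t,r)$. Your explicit verification that (\ref{C_embedding_crit}) yields the embedding and hence $A \in \mathcal{L}(V_\delta,\R^n)$ is placed by the paper in the discussion preceding the Corollary rather than in its proof, but the content is identical.
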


\begin{proof}
From (\ref{main_integral_eq}), the definition of operator $\Phi$, the continuous embedding $V_{\delta} \hookrightarrow C(\mathcal{D})$ and (\ref{pointwise_operator}) we have
\begin{align*}
&\widehat{\theta}_t = \int^t_0 \Phi(t,s)A^{*} \dif \xi_s -  \int^t_0 \Phi(t,s)A^{*}A \widehat{\theta}_s \dif s\\
&= \sum_{j=1}^n  \int^t_0 \E \left[\theta_t \langle \theta_s -  \widehat{\theta}_s , A_j \rangle_\bb \right] \dif \xi_s - \sum_{j=1}^n \int^t_0 \E \left[\theta_t \langle \theta_s -  \widehat{\theta}_s, A_j \rangle_\bb \right] \langle \widehat{\theta}_s , A_j \rangle_\bb \dif s\\
&=  \sum_{j=1}^n  \int^t_0 \E \left[(\theta_s -  \widehat{\theta}_s) (z_j) \theta_t \right] \dif \xi_s - \sum_{j=1}^n \int^t_0 \E \left[(\theta_s -  \widehat{\theta}_s) (z_j) \theta_t \right] \widehat{\theta}_s (z_j) \dif s
\end{align*}
which concludes the proof of (\ref{main_integral_eq_pointwise}).

Analogously, using (\ref{main_cov_eq}) we obtain
\begin{align*}
&\Phi_{z_i}(t,s) = \Phi(t,s) A_i\\
&=  \E \left[ \theta_t \langle \theta_s , A_i \rangle_\bb \right] - \sum_{j=1}^n \int^s_0 \E \left[\theta_t \langle \theta_r -  \widehat{\theta}_r , A_j \rangle_\bb \right] \langle \E \left[\theta_s \langle \theta_r -  \widehat{\theta}_r , A_j \rangle_\bb \right] , A_i \rangle_\bb \dif r\\
&= \E [\theta_s(z_i) \theta_t] - \sum_{j=1}^n \int^s_0 \E \left[ (\theta_r -  \widehat{\theta}_r ) (z_j) \theta_t \right] \left[ \left( \E \left[(\theta_r -  \widehat{\theta}_r ) (z_j) \theta_s \right] \right)(z_i) \right] \dif r\\
&=  \E [\theta_s(z_i) \theta_t]- \sum_{j=1}^n \int^s_0 \Phi_{z_j}(t,r) \left[ \Phi_{z_j}(s,r) (z_i ) \right] \dif r
\end{align*}
which concludes the proof of (\ref{main_cov__pointwise}).
\end{proof}

Note that if the driving process $B=B^h$ is a fractional Brownian motion the condition (\ref{C_embedding_crit}) reads 
\begin{equation*}
 h > \frac{d}{4} + \gamma, \quad h \in (0,1),
\end{equation*}
(cf. \citep{CMO}).

It may be interesting to specify the covariances $\E [\theta_s(z_i) \theta_t]$ that appear in the equation (\ref{main_cov__pointwise}). Suppose, for simplicity, that the driving process $B_t = B^h_t$ is a fractional Brownian motion with the Hurst parameter $h > 1/2$, $n=1$ (i.e. the process $\theta = \lbrace \theta_t, t \in [0,T] \rbrace$ is observed at a single point $z_1 \in \mathcal{D}$) and the noise term $G$ is Hilbert-Schmidt, i.e. it may be expressed as
\begin{equation*}
[G(f)](\xi) = \int_{\mathcal{D}} k(\xi,\eta)f(\eta) \dif \eta, \quad f \in H = L^2(\mathcal{D}),
\end{equation*}
where $k \in L^2 (\mathcal{D} \times \mathcal{D})$. It is also well known that the semigroup $(S(t), t\in \R)$ may be represented by a Green function $g: [0,T] \times \mathcal{D} \times \mathcal{D} \rightarrow \R$, that is,
\begin{equation*}
[S(t)(f)](\xi) = \int_{\mathcal{D}} g(t,\xi,\eta)f(\eta) \dif \eta, \quad f \in H, \ t>0.
\end{equation*}
Therefore, the composition $S(t)G$ may be written as
\begin{equation*}
[S(t)G(f)](\xi) = \int_{\mathcal{D}} \widetilde{g}(t,\xi,\eta)f(\eta) \dif \eta, \quad f \in H, \ t>0,
\end{equation*}
where the composition kernel $\widetilde{g}$ is given by
\begin{equation*}
\widetilde{g}(t,\xi,\eta) = \int_{\mathcal{D}} g(t,\xi,\lambda)k(\lambda,\eta) \dif \lambda.
\end{equation*}
Now it is standard to compute the covariance 
\begin{equation*}
 \E \left[ \theta_s(z_1) \theta_t \right] (\eta) = \int_0^s \int_0^t \phi_h(\lambda,r) \int_{\mathcal{D}} \widetilde{g}(s-r,z_1,\xi) \widetilde{g}(t-\lambda,\eta,\xi) \dif \xi \dif \lambda \dif r,
\end{equation*}
for $(s,t) \in \Delta$, $\eta \in \mathcal{D}$, where $\phi_h(\lambda,r) = h(2h-1) \mid \lambda - r \mid^{2h-2}$ and $(\eta,\lambda) \in \Delta$.

\end{Priklad}

\vspace{1cm}
\par
\noindent \textsc{Acknowledgements:} \it{This research was partially suported by GAUK Grant no. 980218, Czech Science Foundation (GA\v CR) Grant no. 19-07140S  and by the SVV Grant No. 260454.}

\bibliographystyle{mybst}
\bibliography{references}

\end{document}